\DeclareSymbolFont{bbold}{U}{bbold}{m}{n}
\DeclareSymbolFontAlphabet{\mathbbold}{bbold}
\def\resto#1#2{{
#1\hskip 0.4ex\vline_{\hskip 0.2ex\raisebox{-0,2ex}
{{${\scriptstyle #2}$}}}}}
 \def\psp#1#2%
 \def\psb#1#2%
 \def\pscr#1#2#3%
\def\C{{\mathbb C}}
\def\H{{\mathbb H}}
\def\N{{\mathbb N}}
\def\P{{\mathbb P}}
\def\Q{{\mathbb Q}}
\def\R{{\mathbb R}}
\def\Z{{\mathbb Z}}
\def\union{\mathop{\bigcup}}
\def\textmap#1{\mathop{\vbox{\ialign{
                                  ##\crcr
      ${\scriptstyle\hfil\;\;#1\;\;\hfil}$\crcr
      \noalign{\kern 1pt\nointerlineskip}
      \rightarrowfill\crcr}}\;}}
\def\bigtextmap#1{\mathop{\vbox{\ialign{
                                  ##\crcr
      ${\hfil\;\;#1\;\;\hfil}$\crcr
      \noalign{\kern 1pt\nointerlineskip}
      \rightarrowfill\crcr}}\;}}
\newcommand{\cal}{\mathcal}
\def\textlmap#1{\mathop{\vbox{\ialign{
                                  ##\crcr
      ${\scriptstyle\hfil\;\;#1\;\;\hfil}$\crcr
      \noalign{\kern-1pt\nointerlineskip}
      \leftarrowfill\crcr}}\;}}
\def\ig{{\mathfrak i}}
\def\pg{{\mathfrak p}}
\def\qg{{\mathfrak q}}
\def\tg{{\mathfrak t}}
\def\Cg{{\mathfrak C}}
\def\Ig{{\mathfrak I}}
\def\Xg{{\mathfrak X}}
\def\Yg{{\mathfrak Y}}
\theoremstyle{remark}
\newtheorem{ex}{Example}[section]
\newtheorem{quest}{Question}
\newtheorem{conj}{Conjecture}
\newtheorem*{conjGSS}{The GSS conjecture}
\newtheorem{sz}{Satz}[section]
\theoremstyle{remark}
\newtheorem{re}[sz]{Remark} 
\theoremstyle{plain}
\newtheorem{thry}[sz]{Theorem}
\newtheorem{pr}[sz]{Proposition}
\newtheorem{co}[sz]{Corollary}
\newtheorem{dt}[sz]{Definition}
\newtheorem{lm}[sz]{Lemma}
\def\Pic{\mathrm {Pic}}
\def\im{\mathrm{im}}
\def\kod{\kappa}
\def\Def{\mathrm{Def}}
\def\Sing{\mathrm{Sing}}
\newcommand\smvee{{\hskip -0.15ex \raise 0.2ex\hbox{$\scriptscriptstyle\vee$}}}
\def\we{{\smvee \hskip -0.2ex \smvee}}
\newcommand{\longhookrightarrow}{}% teste si deja defini
\DeclareRobustCommand{\longhookrightarrow}{\lhook\joinrel\relbar\joinrel\rightarrow}
\begin{document}
 
\title [Smooth  deformations of singular contractions]{Smooth  deformations of singular contractions of class VII surfaces}
 \author{Georges Dloussky} 
  \address{Aix Marseille Université, CNRS, Centrale Marseille, I2M, UMR 7373, 13453 Marseille, France   }
\email[Georges Dloussky]{georges.dloussky@univ-amu.fr}
\author{Andrei  Teleman}
\email[Andrei  Teleman]{andrei.teleman@univ-amu.fr}
\begin{abstract}
We consider normal compact surfaces $Y$ obtained from a minimal  class VII surface  $X$  by contraction of a cycle $C$ of $r$ rational curves with $C^2<0$. Our main result states that, if the obtained cusp is smoothable, then $Y$ is globally smoothable. The proof   is based on a vanishing theorem for $H^2(\Theta_Y)$. 

If $r<b_2(X)$ any smooth small deformation of $Y$ is rational, and   if $r=b_2(X)$ 
(i.e. when $X$ is a half-Inoue surface)   any smooth small deformation of $Y$ is an Enriques surface.

 The condition ``the cusp is smoothable" in our main theorem can be checked  in terms of the intersection numbers of the cycle, using  the Looijenga conjecture (which has recently become a theorem). Therefore this is a ``decidable" condition. We prove that this condition is always satisfied if $r<b_2(X)\leq 11$. Therefore the singular surface $Y$ obtained by contracting a cycle $C$ of $r$ rational curves in  a minimal class VII surface $X$ with $r<b_2(X)\leq 11$  is always smoothable by rational surfaces.  The statement holds even for unknown class VII surfaces. 
\end{abstract}
\thanks{The authors thank M. Manetti for a fruitful exchange of mails about deformation theory. We also thank very much the unknown referee for the careful reading of the article, and for many valuable suggestions and remarks. }
\maketitle 
%
%\tableofcontents

\section{Introduction}

\subsection{The results}

Class VII surfaces \cite[p. 244]{BHPV} are not classified yet. The global spherical shell (GSS) conjecture, which, if true, would complete the classification of this class, can be stated as follows:
\begin{conjGSS}\label{GSS}
Any minimal class VII surface with $b_2>0$ is a Kato surface. 	
\end{conjGSS}
Recall that a Kato surface is a minimal class VII surface  with positive $b_2$ which admits a GSS. Kato surfaces are considered to be the known surfaces in the class VII; they can be obtained by a simple two-step construction: iterated blown up of the standard ball $B\subset \C^2$  over the origin, followed by a glueing procedure (see \cite{Dl84}, \cite{DlCplxMan} for details).

This important conjecture has been stated by Nakamura in \cite{NaSug} and has been proved for surfaces with $b_2=1$ in \cite{Te1}.  Any Kato surface $X$ has $b_2(X)$ rational curves, and some of these curves form a cycle of rational curves (see section \ref{HZsection}). Therefore any Kato surface contains a cycle of $r$ rational curves with $0< r\leq b_2(X)$.  The standard approach for proving the GSS conjecture   has two steps corresponding to the following two conjectures considered   by experts to be more accessible:
\begin{conj}\label{ConjA}
Any minimal class VII surface with $b_2>0$ has a cycle of rational curves.	
\end{conj}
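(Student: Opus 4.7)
The plan is to adapt the moduli-space / Donaldson-theoretic strategy that succeeded for $b_2=1$ in \cite{Te1} to arbitrary $b_2>0$. Given a minimal class VII surface $X$, the idea is to construct a moduli space $\cal M$ of polystable rank-$2$ holomorphic bundles on $X$ whose boundary strata of properly filtrable bundles force the existence of sufficiently many effective divisors on $X$, and then to organize these divisors combinatorially into a cycle.

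First I would fix a Gauduchon metric $g$ on $X$ and, for a carefully chosen first Chern class $c\in \NS(X)$, consider the moduli space $\cal M^\pst(c, c_2)$ of $g$-polystable rank-$2$ bundles $\cal E$ with $c_1(\cal E)=c$. By Kobayashi--Hitchin this is homeomorphic to a moduli space of ASD connections, and for suitable $(c,c_2)$ the virtual dimension is non-negative and Donaldson-type compactness applies. For every stratum of filtrable polystable bundles one has a short exact sequence
\begin{equation*}
0\to \cal L\to \cal E\to \cal L'\otimes \cal I_Z\to 0,
\end{equation*}
which, together with the stability condition $\deg_g\cal L<\deg_g\cal L'$ and the topological constraints ($b_+=0$, $b_1=1$), produces a non-trivial effective divisor $D$ on $X$ in the class $c_1(\cal L')-c_1(\cal L)$.

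Next I would vary $g$ within the Gauduchon cone and track wall-crossings. Each wall corresponds to a new filtrable stratum opening up and hence to a new effective divisor appearing on $X$. Using Bogomolov-type inequalities and the intersection form on $\NS(X)$, one expects each such divisor to be supported on rational curves with negative self-intersection. An induction on $b_2(X)$ would then aim to show: either the divisors produced this way already contain a cycle, or the complement $X\setminus\bigcup D_i$ has properties (e.g.\ non-trivial pluricanonical sections, or a holomorphic foliation with a contracting leaf) incompatible with the classification of class VII surfaces, yielding a contradiction.

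The main obstacle, and the reason the conjecture remains open for general $b_2$, is the final step: bridging the gap between an inventory of effective divisors with negative self-intersection and the combinatorial conclusion that they form a \emph{cycle}. The bundle-theoretic machinery gives divisors only modulo numerical equivalence, whereas ruling out configurations such as disjoint unions, trees, or chains requires precise intersection data and control of the dual graph. A complementary approach would be to assume for contradiction that no cycle exists, use logarithmic deformation theory along a maximal connected configuration of rational curves, and derive a contradiction from the pseudo-effectivity properties of $K_X$ on class VII surfaces; this route runs into the same combinatorial difficulty, which is precisely what has forced the case-by-case analyses for small $b_2$.
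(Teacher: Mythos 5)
The statement you are trying to prove is labelled as a \emph{conjecture} in the paper (Conjecture \ref{ConjA}), and the paper offers no proof of it: the authors only record that it has been verified for $b_2\leq 3$ and that a program for the general case is developed in \cite{Te2}, \cite{Te3}. So there is no ``paper's own proof'' to compare against, and your proposal should be judged on its own terms as an attempted proof of an open problem.

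As such, it has a genuine gap --- one that you yourself identify in your final paragraph. The Donaldson-theoretic strategy you describe (compactified moduli of polystable rank-$2$ bundles, filtrable boundary strata yielding extensions $0\to \cal L\to \cal E\to \cal L'\otimes \cal I_Z\to 0$ and hence effective divisors, wall-crossing in the Gauduchon cone) is indeed the approach of \cite{Te1} for $b_2=1$ and of \cite{Te2}, \cite{Te3} for the general program, so the outline is faithful to the state of the art. But several steps are asserted rather than proved: that ``suitable $(c,c_2)$'' with the required compactness and dimension properties exist for arbitrary $b_2$; that each divisor produced is supported on rational curves; and, decisively, that the resulting collection of curves can be organized into a cycle rather than, say, a tree or a disjoint union. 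That last combinatorial step is exactly where the known program stalls for $b_2\geq 4$, and no argument is supplied to close it. A sketch that ends by naming the missing step is an honest research summary, not a proof; the conjecture remains open, and nothing in the paper under review changes that (the paper's results are conditional on the \emph{existence} of a cycle, which is taken as a hypothesis in Theorem \ref{lissageglobal}).
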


\begin{conj}\label{ConjB}
Any minimal class VII surface with $b_2>0$ containing   	a cycle of rational curves is a Kato surface. 
\end{conj}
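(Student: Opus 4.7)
My strategy is to combine the paper's main smoothability theorem with the deformation-theoretic characterisation of Kato surfaces. Starting from a minimal class VII surface $X$ with $b_2(X)>0$ containing a cycle $C$ of $r$ rational curves, I would first contract $C$ to obtain a normal compact surface $Y$ with a cusp singularity at a point $p$, together with the contraction morphism $\pi\colon X\to Y$. By the Looijenga conjecture (now a theorem of Gross--Hacking--Keel) the cusp is smoothable for many intersection patterns, and the paper states that this holds automatically when $r<b_2(X)\leq 11$. In that case, the paper's main theorem produces a global smoothing $\mathcal{Y}\to \Delta$ whose generic fibre is a smooth rational surface (if $r<b_2(X)$) or an Enriques surface (if $r=b_2(X)$).

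The second half of the argument must return from this smoothing to the structure of $X$ itself. Since $X$ and $Y$ differ only along $\pi^{-1}(p)=C$, a relative deformation of the contraction $\pi$ over $\Delta$ should yield a simultaneous family of surfaces birational to $\mathcal{Y}_t$, and I would aim to exhibit, inside a suitable Stein neighbourhood of $C$ in $X$, an embedded three-sphere that bounds a ball in a nearby smooth fibre $Y_t$; such a sphere is exactly a global spherical shell, placing $X$ in the Kato class. Concretely, this step would combine the semi-universal deformation theory of cusp singularities with results of Dloussky--Oeljeklaus--Toma identifying Kato surfaces among class VII surfaces admitting deformations to blown-up Hopf or rational surfaces.

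The chief obstacle is precisely this last passage back to $X$: the global smoothing of $Y$ determines the geometry only in a neighbourhood of the cusp, whereas producing a GSS requires an embedded three-sphere whose complement is holomorphically controlled and whose placement is compatible with the full holomorphic structure of $X$. Bridging this gap essentially amounts to the GSS conjecture itself in the presence of a cycle, and it seems unlikely that the smoothing of $Y$ alone carries enough information to reconstruct the Kato-type glueing globally. The case $r=b_2(X)$ (half-Inoue surfaces) is further complicated by the fact that the smooth nearby fibres are Kähler while $X$ is not, so the degeneration must cross the Kählerian boundary and standard period-theoretic techniques do not apply unchanged; a separate argument tailored to the half-Inoue geometry would be required.
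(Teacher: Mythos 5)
The statement you are trying to prove is Conjecture \ref{ConjB}, which the paper does not prove: it is stated explicitly as an open conjecture, and the authors say in the introduction that ``at this moment we have no method or approach to make progress on Conjecture \ref{ConjB}.'' What the paper offers is only a two-step \emph{strategy} for a subclass of surfaces (those with a cycle satisfying the smoothability criterion and $r<b_2(X)$): (a) classify the degenerations $Y_0\in\Yg_\tg$ of families of smooth rational surfaces with $b_2=10+r$, and (b) show that any such $Y_0$ carries $s=\sum_{i}(c_i-2)$ rational curves. If (b) holds, the minimal resolution $X$ of $Y_0$ is a minimal class VII surface with $r+s=b_2(X)$ rational curves, and the theorem of Dloussky--Oeljeklaus--Toma (\cite{DOT3}: a minimal class VII surface with $b_2$ rational curves is a Kato surface) closes the loop. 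Your first half (contract $C$, invoke Theorem \ref{H2=0} plus Gross--Hacking--Keel to smooth $Y$ globally) correctly reproduces the paper's setup, though you should note that the contraction requires $C^2<0$; the case $C^2=0$ must be split off separately (there $X$ is an Enoki surface, hence already Kato).

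The genuine gap is in your second half, and you have in fact diagnosed it yourself: extracting a global spherical shell in $X$ from the smoothing of $Y$ is not something the smoothing provides, and asserting its existence ``in a suitable Stein neighbourhood of $C$'' is precisely the content of the conjecture, not a consequence of the deformation. This is also not the bridge the paper envisages. The paper's route back to $X$ avoids constructing a spherical shell altogether: it goes through counting rational curves on the degenerate fibre and citing \cite{DOT3}, whose hypothesis is the presence of $b_2(X)$ rational curves, not the existence of a deformation to blown-up Hopf or rational surfaces as you state. Even that route is open, since both steps (a) and (b) are unresolved classification problems (the paper emphasizes that the degenerations in $\Yg_\tg$ can have vanishing algebraic dimension, so they escape Manetti's classification of normal degenerations of $\P^2_\C$). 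In short, the proposal is a strategy outline with an essential missing step, not a proof, and its concluding step misattributes to \cite{DOT3} a statement that theorem does not contain.
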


Conjecture \ref{ConjA} is itself very important because, by a result a Nakamura \cite[Theorem p. 476]{NaToh}, any minimal class VII surface with $b_2>0$ containing a cycle of rational curves is a deformation in large  of a family of blown up primary Hopf surfaces. Therefore, if true, this conjecture would solve the classification of class VII surfaces up to deformation equivalence.  Moreover, this conjecture has been proved for $b_2\leq 3$, and a program for the general case has been developed in  \cite{Te2}, \cite{Te3}.

On the other hand, at this moment we have no method or approach to make progress on Conjecture \ref{ConjB}. The problem is that, although blown up primary Hopf surfaces appear to be very simple complex geometric objects, we have no method to classify or control degenerations (deformations in large)  of families of such surfaces. 

The results in this article are related to Conjecture  \ref{ConjB}, and suggest a new approach which might lead to a proof of this conjecture   for small $b_2$.  Our approach leads to   several new and surprising open questions, which (we believe) are of independent interest for both algebraic geometers and experts in complex analytic geometry. \\

Our main result gives a precise answer to the following question:
\begin{quest}
Let $X$ be a minimal class VII surface and $C\subset X$ be a cycle of rational curves with $C^2<0$. Let $Y$ be the normal surface obtained by contracting $C$ to a point. Does $Y$ admit smooth deformations? If yes, what surfaces appear as smooth deformations of such a normal surface $Y$?	
\end{quest}

This problem has been studied before in special cases by Nakamura \cite{Na80} and Looijenga  \cite{Lo81}. Using the notations and the terminology introduced in section \ref{HZsection}, our main result reads: 
\newtheorem*{th-main}{Theorem \ref{lissageglobal}} 
\begin{th-main}
Let $X$ be a minimal class VII surface,  $C\subset X$ be a  cycle of $r$ rational curves with $C^2<0$, $[c_0,\ldots,c_{r-1}]$ be its type, and  $(Y,c)$ be the singular contraction	 of $(X,C)$.   Then $r\leq b_2(X)$ and
\begin{enumerate}
\item $Y$ is smoothable if and only if the  dual  $[d_0,\ldots, d_{s-1}]$ of  $[c_0,\ldots,c_{r-1}]$ is the type of an anti-canonical cycle in a smooth rational surface which admits $\P^2$ as minimal model. This condition is always satisfied when $\sum_{i=0}^{r-1} (c_i-2)\leq 10$.  	
\item If $r<b_2(X)$, then  any smooth deformation $Y'$  of $Y$ is a rational surface with 
$$b_2(Y')=10+b_2(X)+C^2=10+r\,.$$
\item If $r=b_2(X)$, then	 $X$ is a half-Inoue surface, and  any smooth deformation $Y'$ of $Y$ is an Enriques surface.
\end{enumerate}
\end{th-main}

The condition ``the  dual  $[d_0,\ldots, d_{s-1}]$ of  $[c_0,\ldots,c_{r-1}]$ is the type of an anti-canonical cycle in a blown up $\P^2_\C$" is decidable algorithmically in terms of the intersection numbers $c_i$. Moreover, the condition $\sum_{i=0}^{r-1} (c_i-2)\leq 10$  is automatically satisfied when $r<b_2(X)\leq 11$ so, in this range, the singular surface $Y$ obtained starting with a pair $(X,C)$ as in Theorem \ref{lissageglobal}  is always smoothable by rational surfaces. 

The main ingredients  in the proof are:  
\begin{itemize}
\item The theorem of  Gross-Hacking-Keel \cite{GHK15}, and Engel \cite{En14} stating that  Looijenga's conjecture is true (see section \ref{LooijSection}). This important result gives a smoothability criterion for the cusp $(Y,c)$, regarded as an  isolated singularity.
\item A general vanishing theorem for $H^2(Y,\Theta_Y)$ (Theorem \ref{H2=0}), which will be proved in detail in section \ref{VanishSection}. 	
\end{itemize}

Compared to previous results dedicated to this problem, our Theorem \ref{lissageglobal} concerns arbitrary (including unknown) class VII surfaces.  Note also that, even within in the class of Kato surfaces, the Inoue-Hirzebruch surfaces considered in \cite{Na80} and \cite{Lo81} are very special: their isomorphism classes give isolated points in the moduli space of Kato surfaces with fixed $b_2$ (which is $b_2$-dimensional).

Let $\tg=[c_0,\dots,c_{r-1}]$ be an oriented cycle of integers, with $c_i\geq 2$ for  $0\leq i\leq r-1$ and $c_i>2$ for at least an index $i$ (see section \ref{HZsection}). A cusp $(V,c)$ will be called cusp of type $\tg$ if the type of the exceptional divisor $C$ of its minimal resolution  (endowed with a suitable orientation) is $\tg$. Suppose now that the cusp $(V,c)$ is smoothable. By the theorem of Gross-Hacking-Keel-Engel mentioned above, this happens if and only if the dual  $[d_0,\dots,d_{s-1}]$ of $\tg$ is the type of an anti-canonical cycle in a smooth rational surface which admits $\P^2$ as minimal model (see  section \ref{LooijSection}).

Let $\Yg_\tg$ be the class of singular compact surfaces $Y$   with the properties
\begin{enumerate}
\item $Y$ has a single singularity which is a cusp $c$  of type $\tg$, and $Y\setminus\{c\}$ is minimal. 
\item $a(Y)=q(Y)=0$, $\kod(Y)=-\infty$, where $a$, $q$, $\kod$ stand for the algebraic dimension, irregularity and the Kodaira dimension respectively.
 \end{enumerate}
 Theorem  \ref{lissageglobal} suggests a new strategy for proving Conjecture \ref{ConjB} for a large  subclass of VII surfaces, namely for  class VII surfaces with a cycle of $r< b_2(X)$ rational curves, satisfying the smoothability criterion of Theorem \ref{lissageglobal} (1). This subclass includes all class VII surfaces with $b_2(X)\leq 11$ which are not half-Inoue surfaces. This new strategy has two steps:
\begin{enumerate}[(a)]
\item Classify all families $(Y_z)_{z\in D}$ such that $Y_0\in\Yg_\tg$, and for $z\ne 0$ the fibre $Y_z$ is a smooth rational surface with $b_2(Y_z)=10+r$. %which admits $\P^2$ as minimal model.
\item Prove that any surface $Y_0\in\Yg_\tg$ obtained in this way  contains   $s=\sum_{i=0}^r (c_i-2)$ rational curves. 	
\end{enumerate}

Note that, if $Y_0$  contains $s$ rational curves,  its  minimal resolution $X$ will be a minimal class VII surface with $r+s$ rational curves. Since   $X$ is minimal, we have  $r+s=b_2(X)$ \cite[Corollary 2.36]{DlJAMS}, so $X$ will be  a  minimal class VII surface with $b_2(X)$ rational curves. By the main result of \cite{DOT3} such a surface is a Kato surface. Therefore this strategy will lead to a proof of Conjecture \ref{ConjB} for our subclass of class VII surfaces.

Concerning  step (a) note that  Manetti has shown \cite{Ma} that any normal degeneration $Y$ of $\P^2_\C$ is a  possibly singular  projective surface $Y$ with $q(Y)=P_n(Y)=0$ ($n>0$). Here   $P_n(Y)$ stand for the   the $n$-plurigenus of $Y$. Manetti has also proved    a classification theorem for the normal degenerations of $\P^2_\C$.

Theorem \ref{lissageglobal} shows that a degeneration  of  a family of non-minimal rational surfaces may have vanishing algebraic dimension, so may be an object exterior to the algebraic geometric category. The problem raised in step (a) concerns degenerations of families of arbitrary rational surfaces, so it appears to be more complex than Manetti's classification of degenerations of $\P^2_\C$. On the other hand step (a) does not require the classification of all possible normal degenerations, but   only of the  degenerations which belong to the class $\Yg_\tg$ (so with prescribed, very simple singularities).  \\

Theorem  \ref{lissageglobal} leads to difficult, interesting questions which are of independent interest:
\begin{quest}
Let ${\cal M}$ be the fine moduli space of polarized Enriques surfaces considered in \cite[section 5.3]{JoKr}, and $\Xg\to {\cal M}$ be the universal family. 	Classify the holomorphic maps  $h:D^*\to {\cal M}$ for which the pull-back family $(X_z)_{z\in D^*}$ converges to a surface with a single singularity, which is a cusp.
\end{quest}

\begin{quest}
Let $S_N$ be the configuration space parameterizing $N$-fold iterated blown up projective planes, and let $\Xg_N\to S_N$ be the universal family \cite[section 5.1]{DeGr}. Classify the holomorphic maps  $h:D^*\to S_N$ for which the pull-back family $(X_z)_{z\in D^*}$ converges to a  surface $Y$ with a single singularity, which is a cusp. 	
\end{quest}

Note that the curve configuration of the degeneration $Y$ might be quite complicated: for instance, if $X$ is an intermediate Kato surface (a Kato surface with a unique cycle and trees of rational curves intersecting the cycle) $Y$ will contain a union of trees of rational curves passing through the singularity.

Even in the case when $Y$ is the contraction of a known surface, in general we cannot expect the rational surfaces $X_z$ to possess an anti-canonical divisor.

\subsection{Cycles of rational curves on class VII surfaces. The Hirzebruch-Zagier duality}\label{HZsection}

We recall that a cycle of rational curves in a complex surface $X$ is an effective divisor $C\subset X$ which is either a nodal rational curve, or $C=C_0+C_1$ where $C_i$ are two smooth rational curves which intersect transversally in two points, or can be written as $C=\sum_{i\in\Z_r}C_i$ with $r\geq 3$ and $C_i$ are smooth rational curves such that
\begin{equation}\label{cycledef}
C_i\cdot C_j=\left\{
\begin{array}{ccc}
1 & \rm if & j=i\pm 1 \hbox{ mod } r\\	
0 & \rm if & j\ne i\pm 1 \hbox{ mod } r
\end{array}\right.\,.	
\end{equation} 
For $r\geq 3$ we will use only indexing functions 
$$\Z_r\to  \mathrm{Irr}(C):=\hbox{  the set of irreducible components of }C$$
for which (\ref{cycledef}) holds.  
%
%A $\Z_r$-torsor structure on $\mathrm{Irr}(C)$ induced by a compatible  indexing function  will also be called compatible. The set of  compatible indexing functions is invariant under right composition with $-\id_{\Z_r}$ so,  for $r\geq 2$,  we obtain in this way two compatible $\Z_r$-torsor structures on the set $\mathrm{Irr}(C)$. 

\begin{dt} An oriented cycle  of  rational curves is a cycle of rational curves endowed with a generator   $\chi$ of $H_1(C,\Z)$ (or, equivalently, 	endowed with an orientation of the line	$H_1(C,\R)$).
\end{dt}

We recall that the class VII of surfaces is defined by
$$\mathrm{VII}:=\{X \hbox{ complex surface }|\ b_1(X)=1,\ \kod(X)=-\infty\}\,.
$$
The line $H^1(X,\R)\simeq\R$ of a class VII surface can be canonically oriented. Indeed, let $g$ be a Gauduchon metric on $X\in \mathrm{VII}$. The formula
$$[\eta]_\mathrm{DR}\mapsto -\int_X d^c\eta\wedge\omega_g
$$
defines an isomorphism $\H^1(X,\R)\textmap{\simeq}\R$, and the orientation of $H^1(X,\R)$ induced by this orientation is independent of $g$.  It is well-known that, for a cycle of rational curves $C\subset X$ the induced map $H_1(C,\R)\to H_1(X,\R)$ is always an isomorphism. Therefore 
\begin{re} A cycle of rational curves in a class VII surface is canonically oriented.	
\end{re}
Note that this remark also holds for the unknown class VII surfaces. For a cycle $C$ in a class VII surface we will always use an indexing function which is compatible with  this canonical orientation.   
%
%A  generator  $\chi\in H_1(C,\Z)$ defines a covering transformation of the universal cover $\tilde C\to C$, so a $\Z$-torsor structure on the set $\mathrm{Irr}(\tilde C)$, which induces  a  compatible $\Z_r$-torsion structure on $\mathrm{Irr}(C)$.  Therefore, for an oriented cycle $C$, the set $\mathrm{Irr}(C)$ comes with a canonical $\Z_r$-torsor structure.  

Let $M$ be a set, and $r\in\N^*$.   We recall that an oriented $r$-cycle in $M$ is an orbit with respect to the natural right $\Z_r$-action on the set 
$$M^{\Z_r}:=\{\gamma:\Z_r\to M\}$$
given by composition with translations in $\Z_r$. The orbit of $(m_0,\dots,m_{r-1})\in M^{\Z_r}$ will be denoted  $[m_0,\dots,m_{r-1}]$. The set of oriented $r$-cycles in $M$ will be denoted by $M^{\Z_r}/\Z_r$.

\begin{dt} \label{TypeCycle} Let $C=\sum_{i\in\Z_r} C_i$ be an oriented cycle of $r$ rational curves, where the indexing function $i\mapsto C_i$ is compatible with the orientation.  The type of $C$ is the element $[c_0,\ldots,c_{r-1}]\in  \Z^{\Z_r}/\Z_r$ given by
$$[c_0,\ldots,c_{r-1}]=\left\{
\begin{array}{ccc}
[-C_0^2,\ldots,-C_{r-1}^2] &{\rm if}& r\ge 2\\ \quad [2-C_0^2] &{\rm if}& r=1	
\end{array}\right.\,.
$$
\end{dt}
The separate definition in the case $r=1$ is chosen such that the following universal formula holds for any cycle:
\begin{equation}\label{UnivForm}
-C^2=\sum_{i=0}^{r-1} (c_i-2)\,.	
\end{equation}

\vspace{3mm}

Let now $X$ be a minimal class VII surface with $b_2(X)>0$,  let $C$ be a cycle of rational curves in $X$, and let $[c_0,\dots,c_{r-1}]$ be its type. Then $c_i\geq 2$ for all $0\leq i\leq r-1$. Moreover, using (\ref{UnivForm}) we see that   $[c_0,\dots,c_{r-1}]=[2,\dots,2]$ if and only if $C^2=0$. If $X$ has such a cycle, then $r=b_2(X)$, and $X$ is an Enoki surface, i.e. it is biholomorphic with a compactification of an affine line bundle on an elliptic curve \cite{Eno}. In this article we are interested in minimal class VII surfaces $X$ having a cycle of rational curves $C$ with $C^2<0$. The type of such a cycle is an element $[c_0,\dots,c_{r-1}]\in  \Z^{\Z_r}/\Z_r$, where $c_i\geq 2$ for all $i$, but there exists $j$ such that  $c_j\geq 3$.     

Therefore we define
\begin{equation}\label{Tr}
\begin{split}
{\cal T}_r&:=\left\{[c_0,\dots,c_{r-1}]\in  (\Z_{\geq 2})^{\Z_r}/\Z_r\ \vline \ \exists j\in\Z_r \hbox{ such that } c_j\geq 3\right\},\\
 {\cal T}&:=\bigcup_{r\in\N^*}{\cal T}_r\,,
 \end{split}	
\end{equation}
and we note that the type of a cycle of rational curves with negative self-intersection in a minimal class VII surface is always an element of ${\cal T}$.  An important role in our article will be played by the following
\begin{re}
The set  ${\cal T}$ comes with a natural involution, called the Hirzebruch-Zagier duality.
\end{re} 
This duality can be defined as follows: for an element $\tg=[c_0,\dots,c_{r-1}]\in {\cal T}$ its dual $\tg^*$ is $[d_0,\dots,d_{s-1}]$ where
$$s:=\sum_{j=0}^{r-1} (c_j-2)=-C^2\,,$$
and $[d_0,\dots,d_{s-1}]$ is defined in the following way: Put $k:= \#\{j\in\Z_r|\ c_j\geq 3\}$, and write $[c_0,\dots,c_{r-1}]$ in the  form
$$[c_0,\dots,c_{r-1}]=[a_0+2,\underbrace{2,\dots,2}_{b_0-1},\dots,a_{k-1}+2,\underbrace{2,\dots,2}_{b_{k-1}-1}]\,,
$$
where $a_i$, $b_i\in\N^*$. Then
$$[d_0,\dots,d_{s-1}]=[\underbrace{2,\dots,2}_{a_0-1},b_0+2,\dots,\underbrace{2,\dots,2}_{a_{k-1}-1}, b_{k-1}+2]\,.
$$
In other words, the dual cycle $[d_0,\dots,d_{s-1}]$ is constructed by replacing any element $c_j\geq 3$ in the original  cycle by the (possibly empty) sequence $\underbrace{2,\dots,2}_{c_{j}-3}$, and any maximal (possibly empty) sequence of the form $\underbrace{2,\dots,2}_{l}$ in the original cycle by the single element $l+3$.\\

The Hirzebruch-Zagier duality has been discovered as a geometric property of the  curves on Inoue-Hirzebruch surfaces  \cite{Hi73}, \cite{HiZ}, \cite{Na80} \cite{Dl88}: 
\begin{re}
An Inoue-Hirzebruch surface has two cycles of rational curves whose types are dual to each other. 	
\end{re}

The following remark summarizes several general, basic properties of cycles with negative self-intersection on minimal class VII surfaces (see \cite[Theorem 9.2]{NaInv}). In particular we see that a half-Inoue surface $X$ has a cycle $C$ of $b_2(X)=-C^2$ rational curves whose type is self-dual.
\begin{re}\label{HalfInoue}  Let $X$ be minimal class VII surface, and $C\subset X$ be a  cycle of $r$ rational curves with $C^2<0$, and $[c_0,\dots,c_{r-1}]$ be its type. Then
\begin{enumerate}
	\item 	$r\leq b_2(X)$.
\item The natural morphism $H_1(C,\Z)\to H_1(X,\Z)$ is a monomorphism, and 
 $$[H_1(X,\Z):H_1(C,\Z)]\in\{1,2\}\,.$$
\item One has $r< b_2(X)$ if and only if $[H_1(X,\Z):H_1(C,\Z)]=1$.
\item \label{r=b2} One has $r= b_2(X)$ if and only if $X$ is a half-Inoue surface. Such a surface has the properties:
\begin{enumerate}
\item $[H_1(X,\Z):H_1(C,\Z)]=2$.
\item The type $[c_0,\ldots,c_{r-1}]$ of $C$ coincides with its dual, in particular $$-C^2=r=b_2(X)\,.$$
\item \label{order2} The isomorphism class $[{\cal K}_X(C)]$ is an order 2 element of $\Pic^0(X)$.
\end{enumerate}
\end{enumerate}
\end{re}

\section{A vanishing theorem for singular contractions of class VII surfaces}
\label{VanishSection}

For a complex space $Y$ we will denote by $\Omega_Y$ its sheaf of Kähler differentials, and by $\Theta_Y$ its dual. If $Y$ is smooth of dimension $n$ then both sheaves are locally free of rank  $n$, and can be identified with the sheaves of sections of the cotangent, respectively tangent bundle.\\

Let $X$ be a minimal class VII surface, and let $C\subset X$ be a  cycle of rational curves  with $C^2<0$. The intersection form of such a cycle is negative definite. Let $\pi:X\to Y$ be the projection onto the singular surface  obtained by contracting $C$, and let $c\in Y$ be the isolated normal singularity corresponding to $C$. This singularity is a cusp, in particular it is minimal elliptic and Gorenstein \cite[Theorem 7.6.6, Definition 7.6.9, p. 153]{Ish}, so the dualising sheaf $\omega_Y$ is invertible. 
\begin{lm}\label{FirstIsos} For a sufficiently small open neighbourhood $U$ of $C$ the following holds
\begin{enumerate}
\item \label{Ktrivial} The line bundle ${\cal K}_U(C)$ is trivial.
\item \label{Krest} For any $n\in\Z$ the restriction morphism ${\cal K}_X^{\otimes n}(nC)(U)\to 	{\cal K}_X^{\otimes n}(U\setminus C)$ is an isomorphism.
\item  \label{Omega} The restriction morphism $\Omega_X(U) \to	\Omega_X(U\setminus C)$ is an isomorphism.
\end{enumerate}	
\end{lm}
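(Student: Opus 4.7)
The plan is to exploit that a cusp is Gorenstein with canonical cycle equal to its reduced exceptional divisor $C$. Adjunction on each component gives $K_X\cdot C_i = c_i-2$, while the cycle structure yields $C\cdot C_i = 2-c_i$, so $(K_X+C)\cdot C_i = 0$ for every $i\in\Z_r$. Together with the Gorenstein property, Laufer's identity $\pi^*\omega_Y\cong\omega_X\otimes{\cal O}_X(Z_K)$ forces the canonical cycle to be $Z_K = C$. I would then choose $V$ to be a contractible Stein neighbourhood of $c$, so that $\omega_Y|_V\cong{\cal O}_V$, and set $U:=\pi^{-1}(V)$; pulling back along $\pi$ yields ${\cal K}_U(C)\cong{\cal O}_U$.

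\textbf{Part (2).} Using the trivialising section $s$ of ${\cal K}_X(C)|_U$ produced in (1), the tensor power $s^{\otimes n}$ trivialises ${\cal K}_X^{\otimes n}(nC)|_U$ for every $n\in\Z$, so the claim reduces to the assertion that the restriction ${\cal O}_X(U)\to{\cal O}_X(U\setminus C)$ is an isomorphism. Pushing this down by $\pi$, the equality $\pi_*{\cal O}_U={\cal O}_V$ (Zariski's main theorem for the proper modification of the normal space $Y$) together with the fact that $\pi$ is biholomorphic over $V\setminus\{c\}$ identify the map with ${\cal O}_Y(V)\to{\cal O}_Y(V\setminus\{c\})$. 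This is an isomorphism by Hartogs extension across the isolated point $c$, since normality gives $\mathrm{depth}_c{\cal O}_{Y,c}\ge 2$.

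\textbf{Part (3).} Injectivity is immediate because $\Omega_X$ is locally free on the smooth surface $X$ and $U\setminus C$ is dense in $U$. For surjectivity, my plan is to again push forward and identify the map with $(\pi_*\Omega_X)(V)\to(\pi_*\Omega_X)(V\setminus\{c\})$. The target coincides with $\Omega_Y^{[1]}(V)$, where $\Omega_Y^{[1]}$ denotes the reflexive hull of $\Omega_Y$, since reflexive sheaves on the normal two-dimensional space $Y$ satisfy Hartogs extension across the codimension-two point $c$. The claim then reduces to the equality $\pi_*\Omega_X=\Omega_Y^{[1]}$ at the stalk over $c$, i.e., every Zariski $1$-form near $c$ pulls back to a holomorphic $1$-form on all of $U$. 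I would establish this either by invoking the extension theorem for differentials on log canonical surface singularities (cusps being precisely the minimal log canonical Gorenstein ones), or by a direct local verification using the explicit realisation of a cusp as the quotient of a tube domain by a discrete group action of Hirzebruch--Inoue--Nakamura type.

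The genuinely delicate step is (3): parts (1) and (2) are essentially formal once one accepts Gorensteinness of cusps and Hartogs' theorem on normal surfaces, but in (3) the exceptional locus $C$ has codimension one in $X$, so the extension is not automatic, and one really has to use the specific log canonical geometry of the cusp to guarantee that pulling back a Zariski $1$-form yields a genuinely holomorphic (not just meromorphic) form across~$C$.
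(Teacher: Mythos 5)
Your parts (1) and (2) are sound. For (1) you take a different route from the paper: you derive the triviality of ${\cal K}_U(C)$ from the Gorenstein property of the cusp, the computation $({\cal K}_X+C)\cdot C_i=0$, and Laufer's formula $\pi^*\omega_Y\simeq {\cal K}_X(Z_K)$ forcing $Z_K=C$, whereas the paper invokes Laufer's tautness theorem to identify $U$ with a neighbourhood of a cycle $\tilde C$ in an Inoue--Hirzebruch surface, where ${\cal K}\simeq{\cal O}(-\tilde C-\tilde D)$ is known explicitly. Both are legitimate and non-circular; yours is arguably more intrinsic. Part (2) is essentially the paper's argument: trivialize, then extend the coefficient function across the normal point $c$.

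Part (3) is where the genuine gap lies, and you correctly flag it as the delicate step but do not close it. Your first proposed route, the extension theorem for differentials on log canonical singularities (Greb--Kebekus--Kov\'acs--Peternell), in its standard form only produces an extension of $\eta$ as a section of $\Omega^1_X(\log C)$, i.e. with logarithmic poles along the exceptional cycle; passing from a log-pole extension to a pole-free one is precisely the content of the assertion, so this route begs the question unless you can quote a pole-free version tailored to cusps (no such statement holds for arbitrary lc surface singularities, so the specific geometry of the cusp must enter somewhere). Your second route, a direct computation on the tube-domain model, is not carried out. The paper's actual proof is short and rests on a tool you are missing: the van Straten--Steenbrink extendability criterion [SS, Corollary 1.4], which says that a holomorphic $1$-form $\eta$ on $U\setminus C$ extends holomorphically to $U$ as soon as its class in $H^0(U\setminus C,\Omega^2)\big/H^0(U,\Omega^2(\log C))$ vanishes, i.e. as soon as $d\eta$ extends with at worst log poles along $C$. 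That hypothesis is exactly what part (2) (for $n=1$) delivers, since ${\cal K}_X(C)=\Omega^2_X(\log C)$ for the normal crossing cycle $C$. So in the paper (3) is a formal consequence of (2) via [SS]; your proposal instead treats (3) as an independent, and in fact harder, extension problem, and leaves it unresolved.
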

\begin{proof}
There exists an open neighbourhood $U$ of $C$ in $X$ which can be identified with a neighbourhood $\tilde U$ of a cycle $\tilde C$ in an Inoue-Hirzebruch surface $\tilde X$ such that $C$ corresponds to $\tilde C$. This follows from  \cite[p. 139]{Lau}, taking into account that any type  $\tg\in {\cal T}$ coincides with the type of a cycle of an Inoue-Hirzebruch surface (see also \cite[proof of Lemma 4.3, p. 409]{NaInv}).  We suppose that $\tilde U$ does not intersect the second cycle $\tilde D$ of $\tilde X$. Since ${\cal K}_{\tilde X}\simeq {\cal O}_{\tilde X}(-\tilde C-\tilde D)$ \cite[section 6]{NaInv}, we obtain a nowhere vanishing section $\eta_0\in {\cal K}_U(C)$, which proves (1).   

For (2) note that the restriction morphism ${\cal K}_X^{\otimes n}(nC)(U)\to 	{\cal K}_X^{\otimes n}(U\setminus C)$ is obviously injective. Let $\lambda\in {\cal K}_X^{\otimes n}(U\setminus C)$, and let $\varphi\in {\cal O}(U\setminus C)$ be such that 
$$\lambda=\varphi\resto{\eta_0^{\otimes n}}{U\setminus C}\,.$$
Since   $c$ is an isolated normal singularity there exists  a  holomorphic extension  $\tilde\varphi$ of $\varphi$ to $U$ \cite[Proposition p. 119]{Fi}.  Then $\tilde\lambda:=\tilde\varphi  \eta_0^{\otimes n}\in {\cal K}_X^{\otimes n}(nC)(U)$, and its restriction to $U\setminus C$ is $\lambda$. 

For (3) note first that the restriction morphism $\Omega_X(U)\to 	\Omega_X(U\setminus C)$ is obviously injective.  For the surjectivity, let $\eta\in \Omega_X(U\setminus C)=H^0(U\setminus C,\Omega^1)$. By (2) the differential 
$$d\eta\in H^0(U\setminus C,\Omega^2)= {\cal K}_X(U\setminus C)$$
  extends to a section $\kappa\in {\cal K}_X(C)(U)$, so $d\eta\in H^0(U,\Omega^2(\log(C)))$, so the class of $d\eta$ in the quotient $H^0(U\setminus C,\Omega^2)/H^0(U,\Omega^2(\log(C)))$ vanishes. 
By \cite[Corollary 1.4]{SS} this implies that $\eta$ extends on $U$.
\end{proof}

\begin{thry} \label{piTh} Let $X$ be a minimal class VII surface, and $C\subset X$ a  cycle  with $C^2<0$. Let $X':=X\setminus C$, $Y':=Y\setminus\{c\}$, and let $\pi':X'\to Y'$ be the induced biholomorphism.  
\begin{enumerate}
\item \label{omegaY} Let $n\in\Z$.
\begin{enumerate}[(a)]
\item The obvious isomorphism $\omega_{Y'}^{\otimes n}\textmap{\simeq} \pi'_*({\cal K}_{X'}^{\otimes n})$ extends to an isomorphism 
$$\omega_Y^{\otimes n} \textmap{\simeq}\pi_*({\cal K}_X^{\otimes n}(nC)) \,.$$
\item  \label{piomegaY} The obvious isomorphism $\pi'^*(\omega_{Y'}^{\otimes n})\textmap{\simeq} {\cal K}_{X'}^{\otimes n}$ extends to an isomorphism 
$$\pi^*(\omega_Y^{\otimes n})\textmap{\simeq} {\cal K}_X^{\otimes n}(nC)\,.$$
\end{enumerate}
\item \label{piOmegaX} The obvious isomorphism $\pi'_*(\Omega_{X'})\textmap{\simeq} \Omega_{Y'}$ extends to an isomorphism 
$$\pi_*(\Omega_X)=\Omega_Y^{\we}\,.$$
\end{enumerate}

\end{thry}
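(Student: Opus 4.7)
The plan is to verify each assertion locally on a $\pi$-saturated neighborhood $V=\pi(U)$ of $c$, where $U$ is a small neighborhood of $C$ furnished by Lemma \ref{FirstIsos}. Outside $c$, all three morphisms are the canonical ones obtained by transport through the biholomorphism $\pi'$; the substance of the theorem is to extend these across $c$. Since the bulk of the analytic work has already been compressed into Lemma \ref{FirstIsos}, what remains is essentially formal packaging via Hartogs-type extension statements on the normal surface $Y$.

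For (1)(a), on sections over $V$ I would use the chain
\[
\pi_*({\cal K}_X^{\otimes n}(nC))(V)={\cal K}_X^{\otimes n}(nC)(U)\stackrel{\sim}{\longrightarrow} {\cal K}_X^{\otimes n}(U\setminus C)\stackrel{\sim}{\longrightarrow} \omega_Y^{\otimes n}(V\setminus\{c\})\stackrel{\sim}{\longrightarrow} \omega_Y^{\otimes n}(V),
\]
using respectively Lemma \ref{FirstIsos}(2), transport through $\pi'$, and Hartogs extension for the invertible sheaf $\omega_Y^{\otimes n}$ on the normal surface $Y$ (invertibility uses that the cusp is Gorenstein). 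All steps are natural in $V$, so they assemble into a sheaf isomorphism whose restriction to $Y'$ is the obvious one.

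For (1)(b), I would apply $\pi^*$ to the isomorphism of (a) and compose with the canonical counit $\pi^*\pi_*{\cal F}\to {\cal F}$ to obtain a morphism $\varphi:\pi^*(\omega_Y^{\otimes n})\to {\cal K}_X^{\otimes n}(nC)$ which restricts to the canonical isomorphism on $X'$. To check $\varphi$ is an isomorphism on $U$, note that both line bundles are trivial there: ${\cal K}_X^{\otimes n}(nC)|_U$ by Lemma \ref{FirstIsos}(1), and $\pi^*(\omega_Y^{\otimes n})|_U$ because $\omega_Y^{\otimes n}$ is invertible and hence trivial (after shrinking $V$) at the Gorenstein point $c$. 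Thus $\varphi|_U$ is multiplication by some $f\in{\cal O}^*(U\setminus C)$; by normality of $Y$ at $c$, both $f$ and $1/f$ extend to $U$, so $f\in{\cal O}^*(U)$ and $\varphi|_U$ is an isomorphism.

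For (2), I would use $\Omega_Y^{\we}=j_*\Omega_{Y'}$, where $j:Y'\hookrightarrow Y$ is the inclusion of the smooth locus (a standard fact: any reflexive sheaf on a normal surface equals the pushforward of its restriction to the regular locus). Then on $V=\pi(U)$,
\[
\pi_*(\Omega_X)(V)=\Omega_X(U)\stackrel{\sim}{\longrightarrow}\Omega_X(U\setminus C)\stackrel{\sim}{\longrightarrow}\Omega_{Y'}(V\setminus\{c\})=\Omega_Y^{\we}(V),
\]
using Lemma \ref{FirstIsos}(3), transport through $\pi'$, and the definition of $\Omega_Y^{\we}$; these again assemble into the required sheaf isomorphism. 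The only place where I would expect a subtlety is the extension step in Lemma \ref{FirstIsos}(3), which rests on the Saito--Steenbrink logarithmic residue sequence; once that is granted, the rest is routine.
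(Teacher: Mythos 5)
Your proposal is correct and takes essentially the same route as the paper: both arguments reduce everything to Lemma \ref{FirstIsos} plus the extension property of reflexive (here invertible, resp.\ bidual) sheaves across the normal point $c$, assembled over saturated neighbourhoods $V=\pi(U)$. The only cosmetic difference is in (1)(b), where the paper simply observes that the counit $\pi^*\pi_*L\to L$ is an isomorphism because $L={\cal K}_X^{\otimes n}(nC)$ is trivial around $C$, while you verify the same fact by hand via the unit $f\in{\cal O}^*(U)$.
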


\begin{proof} (1)(a) The sheaf $\omega_Y^{\otimes n}$ is invertible, in particular reflexive. Using  \cite[Proposition 5.29, p. 141]{PR} it follows that, for any open neighbourhood $V$ of $c$, the restriction  morphism $\omega_Y^{\otimes n}(V)\to \omega_Y^{\otimes n}(V\setminus\{c\})$ is an isomorphism. On the other hand, one has an obvious isomorphism $\omega_Y^{\otimes n}(V\setminus\{c\})\simeq{\cal K}_X^{\otimes n}(\pi^{-1}(V)\setminus C)$. Using Lemma \ref{FirstIsos} (\ref{Krest}) we obtain, for any sufficiently small open neighbourhood $V$ of $c$, an isomorphism $\omega_Y^{\otimes n}(V)\simeq {\cal K}_X^{\otimes n}(nC)(\pi^{-1}(V))$.

(1)(b) By Lemma \ref{FirstIsos} (\ref{Ktrivial}) we know that the line bundle ${\cal K}_X^{\otimes n}(nC)$ is trivial around $C$. Therefore   the canonical morphism
$$\pi^*\big(\pi_*({\cal K}_X^{\otimes n}(nC))\big)\to {\cal K}_X^{\otimes n}(nC)
$$
is an isomorphism.  But $\pi_*({\cal K}_X^{\otimes n}(nC))=\omega_Y^{\otimes n}$ by (1)(a).

(2) The bidual $\Omega_Y^{\we}$ is a reflexive sheaf \cite[Corollary 1.2]{HaSheaves}. Using \cite[Proposition 5.29]{PR} it follows that, for any  open neighbourhood $V$ of $c$, the restriction morphism
$$\Omega_Y^{\we}(V)\to \Omega_Y^{\we}(V\setminus\{c\})=\Omega_X(\pi^{-1}(V)\setminus C)
$$
is an isomorphism. But, by Lemma \ref{FirstIsos} (\ref{Omega}), for sufficiently small $V$, one has $\Omega_X(\pi^{-1}(V)\setminus C)=\Omega_X(\pi^{-1}(V))$.
\end{proof}

\begin{co} \label{h2Y} One has $h^2({\cal O}_Y)=0$.
\end{co}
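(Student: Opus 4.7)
The plan is to apply Grothendieck--Serre duality on the normal Gorenstein surface $Y$, then use Theorem~\ref{piTh} to transport the question from $Y$ to $X$, and finish with a short cohomological argument based on adjunction along $C$.

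First I would observe that $Y$ is a normal compact Cohen--Macaulay surface with invertible dualising sheaf $\omega_Y$ (the cusp is Gorenstein), so Serre duality yields $H^2(Y,\mathcal{O}_Y)\cong H^0(Y,\omega_Y)^{*}$. Theorem~\ref{piTh}(\ref{omegaY})(a) applied with $n=1$ gives $\omega_Y\simeq \pi_{*}\mathcal{K}_X(C)$, hence
\[
H^0(Y,\omega_Y)=H^0(X,\mathcal{K}_X(C)),
\]
so the statement is equivalent to the vanishing $H^0(X,\mathcal{K}_X(C))=0$.

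For this I would use the adjunction exact sequence
\[
0\longrightarrow\mathcal{K}_X\longrightarrow \mathcal{K}_X(C)\longrightarrow \omega_C\longrightarrow 0
\]
together with $\omega_C\simeq\mathcal{O}_C$ (the cycle $C$ is Gorenstein of arithmetic genus $1$, so $\deg\omega_C=0$ and $h^0(\omega_C)=1$ force $\omega_C$ to be trivial), with $H^0(\mathcal{K}_X)=0$ (class VII surfaces satisfy $p_g=0$), and with Serre duality $h^1(\mathcal{K}_X)=h^1(\mathcal{O}_X)=1$ (class VII surfaces satisfy $q=1$). The long cohomology sequence then truncates to
\[
0\longrightarrow H^0(\mathcal{K}_X(C))\longrightarrow \C \overset{\delta}{\longrightarrow} \C,
\]
and it remains to show that the connecting morphism $\delta$ is non-zero.

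The main (and only) obstacle is this non-vanishing of $\delta$. Under Serre duality on $X$ and on $C$ (using $\omega_C=\mathcal{O}_C$), $\delta$ is dual to the restriction morphism $H^1(X,\mathcal{O}_X)\to H^1(C,\mathcal{O}_C)$ coming from $0\to\mathcal{O}_X(-C)\to\mathcal{O}_X\to\mathcal{O}_C\to 0$. Both sides are $1$-dimensional, so it suffices to show this restriction is non-zero, which I would do by chasing the commutative square of exponential exact sequences
\[
\begin{array}{ccc}
H^1(X,\Z) & \longrightarrow & H^1(C,\Z) \\
\downarrow & & \downarrow \\
H^1(X,\mathcal{O}_X) & \longrightarrow & H^1(C,\mathcal{O}_C).
\end{array}
\]
The top arrow is injective, being Kronecker-dual to the index-$1$-or-$2$ inclusion $H_1(C,\Z)\hookrightarrow H_1(X,\Z)$ supplied by Remark~\ref{HalfInoue} together with the isomorphism $H_1(C,\R)\simeq H_1(X,\R)$ recalled just before it. Both vertical arrows are injective, because on the compact connected spaces $X$ and $C$ the exponential $H^0(\mathcal{O})\to H^0(\mathcal{O}^{*})$ is surjective (global holomorphic functions are constants and $\exp:\C\to\C^{*}$ is onto), so the maps $H^1(\cdot,\Z)\to H^1(\cdot,\mathcal{O})$ are injective. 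Hence a generator of $H^1(X,\Z)\cong\Z$ is sent, via either route, to a non-zero element of $H^1(C,\mathcal{O}_C)$, which forces the bottom arrow, and therefore $\delta$, to be non-zero. This completes the proof.
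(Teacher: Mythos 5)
Your proof is correct and follows essentially the same route as the paper's: Serre duality on $Y$ together with Theorem \ref{piTh} reduces the claim to a vanishing on $X$, which in both cases comes down to the fact that the restriction map $H^1(X,{\cal O}_X)\to H^1(C,{\cal O}_C)$ is an isomorphism of one-dimensional spaces (the paper works with the sequence for ${\cal O}_X(-C)$ and $H^2({\cal O}_X)=0$, while you work with the Serre-dual adjunction sequence for ${\cal K}_X(C)$). The only notable difference is that you actually justify this key isomorphism via the exponential-sequence diagram and Remark \ref{HalfInoue}, whereas the paper simply asserts it.
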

\begin{proof}
Using Serre duality for Cohen-Macauley complex spaces  \cite[Theorem 4.15a p. 171]{Pe}, we obtain $h^2({\cal O}_Y)=h^0(\omega_Y)$.  By Theorem \ref{piTh} (\ref{omegaY})  and the classical Serre duality theorem we have 
$$h^0(\omega_Y)=h^0({\cal K}_X(C))=h^2({\cal O}_X(-C))\,.$$
 The exact sequence
$$\to H^1({\cal O}_X)\textmap{\simeq} H^1({\cal O}_C)\to H^2({\cal O}_X(-C))\to H^2({\cal O}_X)=0
$$
shows that $h^2({\cal O}_X(-C))=0$.
\end{proof}

\begin{co} \label{Iso} Let $X$ be a minimal class VII surface, and $C\subset X$ a  cycle  with $C^2<0$. One has an isomorphism
$$\pi_*(\Omega_X\otimes {\cal K}_X(C))\simeq \Omega_Y^{\we}\otimes \omega_Y\,.
$$
In particular $h^0(\Omega_Y^{\we}\otimes \omega_Y)= h^0(\Omega_X\otimes {\cal K}_X(C))$.
\end{co}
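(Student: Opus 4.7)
The plan is to combine the three identifications provided by Theorem \ref{piTh} by means of the projection formula for $\pi$. The key observation is that $\omega_Y$ is invertible (the cusp is Gorenstein), hence locally free of rank one on $Y$, so the projection formula is available without any subtlety.

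Concretely, I would argue as follows. By Theorem \ref{piTh}(\ref{omegaY})(b), taken with $n=1$, one has a canonical isomorphism $\pi^{*}(\omega_Y)\simeq {\cal K}_X(C)$. Since $\omega_Y$ is a line bundle on $Y$, the projection formula
$$\pi_*\bigl(\Omega_X\otimes\pi^{*}(\omega_Y)\bigr)\simeq \pi_*(\Omega_X)\otimes\omega_Y$$
holds. Substituting $\pi^{*}(\omega_Y)\simeq {\cal K}_X(C)$ on the left and using Theorem \ref{piTh}(\ref{piOmegaX}), namely $\pi_*(\Omega_X)=\Omega_Y^{\we}$, on the right, one obtains the claimed isomorphism
$$\pi_*\bigl(\Omega_X\otimes {\cal K}_X(C)\bigr)\simeq \Omega_Y^{\we}\otimes\omega_Y.$$

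For the numerical consequence, I would simply take global sections. Since $\pi:X\to Y$ is proper and surjective, $H^0(Y,\pi_*{\cal F})=H^0(X,{\cal F})$ for any coherent sheaf ${\cal F}$ on $X$, so the isomorphism above yields
$$h^0(\Omega_Y^{\we}\otimes\omega_Y)=h^0\bigl(Y,\pi_*(\Omega_X\otimes{\cal K}_X(C))\bigr)=h^0\bigl(X,\Omega_X\otimes{\cal K}_X(C)\bigr).$$

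The only step that requires any care is the justification of the projection formula in this (mildly) singular setting. This is not really an obstacle, since $\omega_Y$ is locally free: choosing a cover of $Y$ on which $\omega_Y$ is trivial reduces the isomorphism to the tautology $\pi_*(\Omega_X\otimes{\cal O}_X)=\pi_*(\Omega_X)$, and the local isomorphisms glue because they are induced by a global trivialisation of $\pi^{*}(\omega_Y)\otimes{\cal K}_X(C)^{-1}$. Thus nothing beyond the already established Theorem \ref{piTh} is needed.
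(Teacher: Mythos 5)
Your proposal is correct and coincides with the paper's own argument: both combine Theorem \ref{piTh}(\ref{omegaY})(b) for $n=1$ with Theorem \ref{piTh}(\ref{piOmegaX}) via the projection formula, which applies precisely because $\omega_Y$ is invertible. Nothing further is needed.
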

\begin{proof}
Using Theorem \ref{piTh}(\ref{piomegaY}), Theorem \ref{piTh} (\ref{piOmegaX}) and the projection formula, we obtain
$$\pi_*(\Omega_X\otimes {\cal K}_X(C))\simeq \pi_*(\Omega_X\otimes \pi^*(\omega_Y))\simeq \pi_*(\Omega_X)\otimes\omega_Y\simeq \Omega_Y^{\we}\otimes\omega_Y\,.
$$
The projection formula applies because $\omega_Y$ is locally free \cite[Exercice II.5.1, p. 124]{HaBook}. 
\end{proof}
\begin{thry}\label{H2=0}
Let $X$ be a minimal class VII surface, and $C\subset X$ be a  cycle  with $C^2<0$, and let $Y$ be the singular surface obtained by contracting $C$. One has $H^2(Y,\Theta_Y)=0$.
\end{thry}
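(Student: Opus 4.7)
The plan is to translate $H^2(Y,\Theta_Y)=0$ into a global-section vanishing on the smooth surface $X$ via two successive identifications, and then to attack the resulting statement on $X$ using the structural features of class VII surfaces.

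Since $Y$ is a two-dimensional Cohen--Macaulay complex space with invertible dualising sheaf $\omega_Y$, Serre duality for Cohen--Macaulay complex spaces (cf.\ \cite[Theorem 4.15a, p.\ 171]{Pe}) yields
$$H^2(Y,\Theta_Y)\;\cong\;\mathrm{Hom}_{{\cal O}_Y}(\Theta_Y,\omega_Y)^\vee\;=\;H^0(Y,\Theta_Y^\vee\otimes\omega_Y)^\vee,$$
the second equality using invertibility of $\omega_Y$. Since $\Theta_Y=\Omega_Y^\vee$, its dual is $\Theta_Y^\vee=\Omega_Y^{\we}$; Corollary \ref{Iso} then identifies $H^0(Y,\Omega_Y^{\we}\otimes\omega_Y)$ with $H^0(X,\Omega_X\otimes{\cal K}_X(C))$, and the theorem becomes equivalent to the vanishing
$$H^0(X,\Omega_X\otimes{\cal K}_X(C))=0. \qquad(\ast)$$

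A natural first step toward $(\ast)$ is to tensor the structure sequence $0\to{\cal O}_X\to{\cal O}_X(C)\to{\cal O}_C(C)\to 0$ with $\Omega_X\otimes{\cal K}_X$ and to use the triviality ${\cal K}_X(C)|_C\simeq{\cal O}_C$ from Lemma \ref{FirstIsos}(\ref{Ktrivial}), obtaining the short exact sequence
$$0\to \Omega_X\otimes{\cal K}_X\to \Omega_X\otimes{\cal K}_X(C)\to \Omega_X|_C\to 0.$$
This reduces $(\ast)$ to controlling $H^0(\Omega_X\otimes{\cal K}_X)$ and the connecting map $H^0(C,\Omega_X|_C)\to H^1(X,\Omega_X\otimes{\cal K}_X)$. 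The term $\Omega_X|_C$ is tractable component-wise by the conormal sequences
$$0\to{\cal O}_{C_i}(c_i)\to \Omega_X|_{C_i}\to {\cal O}_{C_i}(-2)\to 0,$$
so that $h^0(\Omega_X|_{C_i})=c_i+1$, and its cohomology over the nodal cycle can be assembled by Mayer--Vietoris.

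The hard part will be to extract $(\ast)$ from the resulting long exact sequence, because for a general minimal class VII surface neither $H^0(\Omega_X\otimes{\cal K}_X)$ --- Serre dual to $H^2(\Theta_X)$ --- nor $H^1(\Omega_X\otimes{\cal K}_X)$ is manifestly zero; in fact a Riemann--Roch count gives $\chi(\Theta_X)=-2\,b_2(X)$, so $H^2(\Theta_X)$ is typically nontrivial. The resolution must exploit in an essential way the specific geometry of $X$: the absence of holomorphic $1$-forms ($h^0(\Omega_X)=0$) and of pluricanonical sections, and the local triviality of ${\cal K}_X(C)$ near $C$ from Lemma \ref{FirstIsos}. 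One concrete route is to interpret a putative nonzero $\xi\in H^0(X,\Omega_X\otimes{\cal K}_X(C))$ as the saturation of a line subbundle ${\cal K}_X(E-C)\hookrightarrow\Omega_X$ for some effective divisor $E$, and then derive a contradiction from the Bogomolov--Teleman-type bounds on the Gauduchon degrees of line subbundles of $\Omega_X$ for minimal class VII surfaces. Matching the local triviality of ${\cal K}_X(C)$ around the cycle with the global cohomological rigidity of $X$ is the technical heart of the argument, and it is this matching that permits the conclusion to hold even for the yet-unclassified minimal class VII surfaces.
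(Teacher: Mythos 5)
Your reduction is exactly the paper's: Serre duality on the Cohen--Macaulay space $Y$, Corollary \ref{Iso}, and Serre duality on $X$ turn the claim into $H^0(X,\Omega_X\otimes{\cal K}_X(C))=0$, equivalently $H^2(X,\Theta_X(-C))=0$. Up to that point you agree with the paper. From there, however, your proposal does not contain a proof: you write down a long exact sequence, observe that you cannot control its terms, and then gesture at a ``Bogomolov--Teleman-type'' degree argument on line subsheaves of $\Omega_X$ without carrying it out. That remaining step is the entire content of the theorem, and it is missing.

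Two concrete points. First, your assertion that $H^0(\Omega_X\otimes{\cal K}_X)$, Serre dual to $H^2(\Theta_X)$, is ``typically nontrivial'' is wrong in the situation at hand: Nakamura proved $H^2(X,\Theta_X)=0$ for a minimal class VII surface containing a cycle of rational curves \cite[Theorem 1.2, p. 478]{NaToh}; the Riemann--Roch count $\chi(\Theta_X)=-2b_2$ only forces $h^1(\Theta_X)$ to be large, not $h^2(\Theta_X)$ to be nonzero. This vanishing, together with Nakamura's $H^2(X,\Theta_X(-\log(C)))=0$ \cite[Theorem 1.3, p. 478]{NaToh}, is precisely how the paper finishes: from the sequence $0\to\Theta_X(-C)\to\Theta_X\to\Theta_{X,C}\to 0$ one must show that $H^1(\Theta_X)\to H^1(\Theta_{X,C})$ is surjective, and this follows by factoring the restriction through $J_C=\Theta_X/\Theta_X(-\log(C))$ and using that $H^1(\Theta_{X,C})\to H^1(J_C)$ is an isomorphism \cite[Lemma 4.3, p. 409]{NaInv}. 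None of these inputs appear in your sketch. Second, the alternative route you propose --- saturating a putative nonzero morphism $\Theta_X\to{\cal K}_X(C)$ into a foliation and playing degrees of line subsheaves of $\Omega_X$ against the curve configuration --- is a known line of attack, but as usually executed it only yields a contradiction for small $b_2$, and you give no argument that it closes in general. So the proposal has the correct reformulation but a genuine gap where the proof should be.
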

\begin{proof}
Using Serre duality for Cohen-Macauley complex spaces  \cite[Theorem 4.15a p. 171]{Pe}, it follows that $h^2(Y,\Theta_Y)=h^0(Y,\Theta_Y^\smvee\otimes\omega_Y)=h^0(Y,\Omega^{\we}_Y\otimes\omega_Y)$. On the other hand, by Corollary \ref{Iso}, one has $h^0(Y,\Omega^{\we}_Y\otimes\omega_Y)=h^0(X,\Omega_X\otimes {\cal K}_X(C))$. Using Serre duality on the smooth surface $X$ we obtain   $h^2(Y,\Theta_Y)=h^2(X,\Theta_X(-C))$. Therefore it suffices to prove that $h^2(X,\Theta_X(-C))=0$. 

Denote by $\Theta_{X,C}$ the restriction to $C$ of the tangent  bundle of $X$, considered as a sheaf on $X$.  The  cohomology long exact sequence associated with the short exact sequence
$$0\to \Theta_X(-C)\to \Theta_X\textmap{r} \Theta_{X,C}\to 0
$$	
contains the segment
$$H^1(X,\Theta_X)\textmap{H^1(r)} H^1(X,\Theta_{X,C})\to H^2(X,\Theta_X(-C))\to H^2(X,\Theta_X)\,.
$$
One has $H^2(X,\Theta_X)=0$ \cite[Theorem 1.2, p. 478 ]{NaToh}, so it suffices to prove that the canonical morphism 
$$H^1(r):H^1(X,\Theta_X)\to H^1(X,\Theta_{X,C})$$
 is surjective.  
 
 Following the notation introduced in \cite[Definitions 4.2, p. 409]{NaInv}, let $J_C$ be the quotient sheaf $\Theta_X/\Theta_X(-\log(C))$.  Taking $E=C$ in \cite[Theorem 1.3, p. 478]{NaToh} we get $H^2(X,\Theta_X(-\log(C)))=0$, so using the long exact cohomology sequence associated with the short exact sheaf sequence 
 $$0\to \Theta_X(-\log(C))\to  \Theta_X\textmap{u} J_C\to 0\,,$$
we see that   $H^1(u):H^1(X,\Theta_X)\to H^1(J_C)$ is surjective.
The inclusion $\Theta_X(-C)\hookrightarrow \Theta_X(-\log(C))$ gives  a sheaf epimorphism $\rho:\Theta_{X,C}\to J_C$, and one obviously has  $u=\rho\circ r$, so $H^1(u)=H^1(\rho)\circ H^1(r)$. We know that $H^1(u)$ is surjective.  Since  $H^1(\rho):H^1(X,\Theta_{X,C})\to H^1(J_C)$  is an isomorphism  \cite[Lemma 4.3, p. 409]{NaInv}, it follows that $H^1(r)$ is an epimorphism as claimed.
 \end{proof}
 
 \section{Globalizing local smoothing}
 
 \subsection{Smoothing components}
 
Let $Y$ be a compact complex space, and let $(U,u_0)$ be the base of a semi-universal deformation $Y\stackrel{i_0}{\longhookrightarrow} {\cal Y}\textmap{p} U$ of $Y$. An irreducible component $(W,u_0)$ of the germ $(U,u_0)$ is called a smoothing component for $Y$, if $p^{-1}(w)$ is smooth for generic $w\in W$.  A compact complex space is $Y$ smoothable if it admits smooth small deformations, i.e. if the base of a semi-universal deformation of $Y$ has a  smoothing irreducible component. These concepts extend in a natural way for isolated singularities as follows:

Let $(Y,c)$ be connected complex space with a single singularity $c$, and let
\begin{equation}\label{versalY}
(Y,c)\stackrel{j_0}{\longhookrightarrow} ({\cal V},j_0(c))\textmap{q} (V,v_0)	
\end{equation}
be a semi-universal deformation of the singularity $(Y,c)$ with base $(V,v_0)$.
Choose  ${\cal V}$ sufficiently small such that $q$ is defined on ${\cal V}$, is flat on ${\cal V}$, and takes values in $V$. Let $\Sing(q)\subset {\cal V}$ be the analytic subset of points of ${\cal V}$ where $q$ is not regular \cite[Definition 1.112, Corollary 116]{GLS}. Using the flatness of $q$ and \cite[Theorem 1.115]{GLS} it follows that
$$\Sing(q)\cap q^{-1}(v)=\Sing(q^{-1}(v))
$$
for any $v\in V$. Put $\qg:=\resto{q}{\Sing(q)}$.
Since $c$ is an isolated singularity, 
the fibre of the germ morphism 
$$(\Sing(q),j_0(c))\to (V,v_0)$$
defined by $\qg$ reduces to $\{j_0(c)\}$, so, by \cite[Definition 1.69, Proposition 1.70]{GLS}, it follows that this germ morphism is finite.  Therefore, we may choose ${\cal V}$, $V$ such that the morphism $\qg:\Sing(q)\to V$ is finite, in particular proper.  With this choice the image $\Sigma:=\im(\qg)=q(\Sing(q))$ will be an analytic set of $V$, and the germ $(\Sigma,v_0)$ is well defined. Note that only the irreducible components of $\Sing(q)$ which contain $j_0(c)$ contribute to this germ. The irreducible components of the germ $(\Sigma,v_0)$ have the form $(q(S),v_0)$, where $(S,j_0(c))$ is an irreducible component of the  germ of analytic set  $(\Sing(q),j_0(c))$. 
\begin{dt} An irreducible component $(Z,v_0)$ of the germ $(V,v_0)$ is called a smoothing component  for the singularity $(Y,c)$, if it is not contained in $(\Sigma,v_0)$, i.e. if   any neighborhood of $v_0$ in $Z$ contains points $v$ for which $q^{-1}(v)$ is smooth.  The isolated singularity $(Y,c)$ is called smoothable if it admits smoothing components.  \end{dt}

Suppose now that $Y$ is compact, and let  
\begin{equation}\label{defYnewnew}
Y\stackrel{i_0}{\longhookrightarrow} {\cal Y}\textmap{p} U	
\end{equation}
be a deformation of $Y$ of base $(U,u_0)$, where $i_0$ identifies $Y$ with the fibre $p^{-1}(u_0)$.

 The germ morphism 
$$({\cal Y},i_0(c))\to (U,u_0)$$
is flat, so it can be regarded as a deformation of the germ $(Y,c)$ with base $(U,u_0)$. Using the universal property of a semi-universal deformation, we obtain a commutative diagram
\begin{equation}\label{defdef}
\begin{diagram}
({\cal Y},i_0(c))&\rTo^{} & ({\cal V},j_0(c))\\	
\dTo^{p}& &\dTo_{q}\\
(U,u_0) &\rTo^{} &(V,v_0)
\end{diagram}
\end{equation}
in the category of germs, which induces a germ isomorphism
$$({\cal Y},i_0(c))\to \big(U\times_V {\cal V},(i_0(c),j_0(c))\big)
$$
over $(U,u_0)$. Therefore, supposing that $U$ and ${\cal V}$ are sufficiently small, there exists an open neighborhood ${\cal Z}$ of $i_0(c)$ in ${\cal Y}$, morphisms $\phi:U\to V$, $\Phi:{\cal Z}\to {\cal V}$, such that $\phi(u_0)=v_0$, $\Phi(i_0(c))=j_0(c)$, $q\circ \Phi=\phi\circ p$, and the induced morphism $\alpha:{\cal Z}\to U\times_V {\cal V}$ is a biholomorphism.  Therefore, for any $u\in U$ the fibre $p^{-1}(u)\cap {\cal Z}$ is identified with $q^{-1}(\phi(u))$ via $\Phi$. It follows that $\Sing(\resto{p}{\cal Z})=\Phi^{-1}(\Sing(q))$,
which implies that 
$$p(\Sing(\resto{p}{\cal Z}))=\phi^{-1}(q(\Sing(q)))=\phi^{-1}(\Sigma)\,.
$$
 Put $\pg:=\resto{p}{\Sing(p)}$. The fibre $\pg^{-1}(u_0)$ coincides with the singleton $\{i_0(c)\}$, so ${\cal Z}\cap\Sing(p)$ is a neighborhood of $\pi^{-1}(u_0)$ in $\Sing(p)$.  But $\pi$ is proper (because $p$ is proper and $\Sing(p)$ is closed in ${\cal Y}$). Therefore for any sufficiently small open neighborhood   $U'$ of $u_0$ in $U$ one has 
 $$p^{-1}(U')\cap \Sing(p)\subset {\cal Z}\cap\Sing(p)=\Sing(\resto{p}{\cal Z})\,.$$
  This shows that
 \begin{lm}\label{EqSmFib}
 For any sufficiently small open neighborhood   $U'$ of $u_0$ in $U$,  the following conditions are equivalent for a point $u\in U'$ :
\begin{enumerate}
\item The fibre $p^{-1}(u)$ is singular.
\item The intersection 	$p^{-1}(u)\cap {\cal Z}$ is singular.
\item $\phi(u)\in\Sigma$.
\end{enumerate}

 \end{lm}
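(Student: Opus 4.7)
The plan is to chain together two ingredients already set up in the paragraphs preceding the lemma. First, for $U'$ sufficiently small one has the inclusion
$$p^{-1}(U')\cap \Sing(p)\subset \Sing(\resto{p}{{\cal Z}})$$
(obtained from the properness of $\pg$ together with $\pg^{-1}(u_0)=\{i_0(c)\}\subset {\cal Z}$). Second, the biholomorphism $\alpha:{\cal Z}\to U\times_V {\cal V}$ restricts, on each fibre over a point $u\in U$, to an isomorphism $p^{-1}(u)\cap {\cal Z}\simeq q^{-1}(\phi(u))$. The whole argument proceeds by shuttling the ``singular or not" question along these two identifications.

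For the equivalence (1)$\Leftrightarrow$(2), I would invoke the flatness criterion \cite[Theorem 1.115]{GLS} cited earlier: for a flat morphism, the singular locus of a fibre equals the intersection of that fibre with the singular locus of the total map. Applied simultaneously to $p$ and to $\resto{p}{{\cal Z}}$, combined with the identity $\Sing(\resto{p}{{\cal Z}})=\Sing(p)\cap {\cal Z}$ and the inclusion recalled above (for $u\in U'$), this gives
$$\Sing(p^{-1}(u))=\Sing(p)\cap p^{-1}(u)=\Sing(p)\cap p^{-1}(u)\cap{\cal Z}=\Sing(p^{-1}(u)\cap{\cal Z}).$$
So the singular loci of the full fibre and of its intersection with ${\cal Z}$ coincide as sets, and in particular one is empty iff the other is, which is precisely (1)$\Leftrightarrow$(2).

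For the equivalence (2)$\Leftrightarrow$(3), I would transport the singularity question to the semi-universal family via the fibrewise biholomorphism induced by $\alpha$: using that $\Phi$ identifies $p^{-1}(u)\cap {\cal Z}$ with $q^{-1}(\phi(u))$, the intersection $p^{-1}(u)\cap {\cal Z}$ is singular iff $q^{-1}(\phi(u))$ is singular, iff (again by flatness of $q$) $q^{-1}(\phi(u))\cap\Sing(q)\ne\emptyset$, iff $\phi(u)\in q(\Sing(q))=\Sigma$. No substantial obstacle is anticipated here; the argument is essentially bookkeeping with the cartesian diagram \ref{defdef}, and the only place where care is required is the initial choice of $U'$ small enough for the properness-based containment recalled just before the lemma to be applicable.
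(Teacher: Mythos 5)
Your proposal is correct and follows essentially the same route as the paper: the paper's argument for this lemma is precisely the combination of the fibrewise identification $p^{-1}(u)\cap{\cal Z}\simeq q^{-1}(\phi(u))$ coming from the cartesian diagram, the flatness criterion \cite[Theorem 1.115]{GLS} relating $\Sing$ of a fibre to the fibre of $\Sing$ of the map, and the properness-based inclusion $p^{-1}(U')\cap\Sing(p)\subset\Sing(\resto{p}{{\cal Z}})$. Your version merely makes explicit the chain of equalities of singular loci that the paper leaves as ``this shows that''; no gap.
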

 
 \subsection{From local smoothing to global smoothing}
 
 Using Lemma 	\ref{EqSmFib}, \cite[Theorem 1.117]{GLS}, and \cite[Theorem 1.117(2)]{GLS} one obtains:
 
\begin{pr}\label{LocGlobSM}
Let $Y$ be a compact complex space with a single singularity $c$, let 
$$(Y,c)\stackrel{j_0}{\longhookrightarrow} ({\cal V},j_0(c))\textmap{q} (V,v_0)
$$
be  a semi-universal deformation of the singularity $(Y,c)$, and let  
\begin{equation}\label{defYnew}
Y\stackrel{i_0}{\longhookrightarrow} {\cal Y}\textmap{p} U	
\end{equation}
be a semi-universal deformation of $Y$ of base $(U,u_0)$. Suppose that the induced germ morphism $\phi:(U,u_0)\to (V,v_0)$ is regular \cite[Definition 1.112]{GLS}. 
\begin{enumerate} 
\item \label{first} $Y$ is smoothable if and only if the isolated singularity $(Y,c)$ is smoothable.
\item If  the germ $({\cal V},j(c_0))$ is smooth, then  the germs $(V,v_0)$, $(U,u_0)$, $({\cal Y},i_0(c))$ are all smooth, and ${\cal Y}$ is smooth around the fibre $i_0(Y)=p^{-1}(u_0)$.
\end{enumerate}
\end{pr}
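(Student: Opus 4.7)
The plan is to reduce both statements to properties of the regular germ morphism $\phi:(U,u_0)\to(V,v_0)$ via Lemma \ref{EqSmFib}, and then to invoke the structural results on regular morphisms of complex-space germs collected in \cite[Theorem 1.117]{GLS}.

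After shrinking $U$ so that Lemma \ref{EqSmFib} applies, one has that $p^{-1}(u)$ is smooth if and only if $\phi(u)\notin\Sigma$. Therefore $Y$ is smoothable if and only if some irreducible component of $(U,u_0)$ is not contained in $(\phi^{-1}(\Sigma),u_0)$, while by definition $(Y,c)$ is smoothable if and only if some irreducible component of $(V,v_0)$ is not contained in $(\Sigma,v_0)$. Since $\phi$ is regular, \cite[Theorem 1.117]{GLS} yields a surjection from the irreducible components of $(U,u_0)$ onto those of $(V,v_0)$, under which a component is contained in $(\phi^{-1}(\Sigma),u_0)$ precisely when its image is contained in $(\Sigma,v_0)$. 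This gives the equivalence in (1).

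For (2), assume that $({\cal V},j_0(c))$ is smooth. Applying \cite[Theorem 1.117(2)]{GLS} to the flat morphism $q$ yields smoothness of $(V,v_0)$, and the regularity of $\phi$ combined with the same result then transfers smoothness to $(U,u_0)$. The germ isomorphism $({\cal Y},i_0(c))\simeq (U\times_V{\cal V},(u_0,j_0(c)))$ established just before Lemma \ref{EqSmFib} — a fibre product of smooth germs along a regular morphism — is therefore smooth, proving smoothness of $({\cal Y},i_0(c))$. Finally, since $c$ is the only singularity of $Y=p^{-1}(u_0)$, while $(U,u_0)$ is now known to be smooth and $p$ is flat, ${\cal Y}$ is smooth at every point of $i_0(Y)\setminus\{i_0(c)\}$ as well, hence smooth around the whole central fibre.

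The main point requiring care is the irreducible-component bookkeeping in (1): one must verify that the correspondence furnished by the regularity of $\phi$ matches the subgerm $(\phi^{-1}(\Sigma),u_0)$ with $(\Sigma,v_0)$ component by component, so that no smoothing component of $(V,v_0)$ is lost under pullback and no spurious smoothing component is created. This is exactly the content of the cited result from \cite{GLS}; the remainder of the argument amounts to a direct use of the fibre-product description of the deformation near the singularity.
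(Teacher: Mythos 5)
Your proposal is correct and takes essentially the same route as the paper: the paper's own proof is nothing more than the citation of Lemma \ref{EqSmFib} together with \cite[Theorem 1.117]{GLS}, and your write-up is a faithful expansion of exactly that argument --- using Lemma \ref{EqSmFib} to translate smoothness of the fibres of $p$ into the condition $\phi(u)\notin\Sigma$, and the product structure $(U,u_0)\simeq (V,v_0)\times(\C^k,0)$ coming from regularity of $\phi$ to match irreducible components and to propagate smoothness in part (2).
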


Note that Proposition  \ref{LocGlobSM} (2) applies in particular to the cusps which are complete intersections \cite{Ka}.

Combining the first statement of Proposition \ref{LocGlobSM} with Theorem \ref{H2=0} and  \cite[Lemma 1, p. 93]{Ma} we obtain:
 \begin{pr}\label{prphiregular} Let $X$ be a minimal class VII surface,  $C\subset X$ be a  cycle  with $C^2<0$, and let $Y$ be the singular surface obtained by contracting $C$.
 Let $\Def(Y)$ be the base of a  semi-universal deformation of $Y$, and 	$\Def(Y,c)$ be the base of a semi-universal deformation of the isolated singularity $(Y,c)$. The natural germ morphism $\phi:\Def(Y)\to \Def(Y,c)$ is regular. 
 \end{pr}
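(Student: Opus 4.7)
The plan is to apply directly the obstruction-theoretic criterion of Manetti \cite[Lemma 1, p.~93]{Ma}, which is precisely designed to compare the global and local deformation germs in the presence of a single isolated singularity. That lemma asserts: for a compact complex space $Y$ with a unique isolated singularity $c$, the natural restriction germ morphism $\phi:\Def(Y)\to\Def(Y,c)$ is regular provided $H^2(Y,\Theta_Y)=0$. The conceptual content is the standard local-to-global principle of deformation theory: the obstruction to extending a deformation of the germ $(Y,c)$ to a global deformation of $Y$ lies in $H^2(Y,\Theta_Y)$, so the vanishing of this group forces every local deformation to extend, and the extension can be taken smoothly in parameters, which is exactly the germ-regularity of $\phi$.

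First I would verify that the hypotheses of Manetti's lemma are met in our setting. The surface $Y$ is compact (it is obtained from the compact surface $X$ by contracting the cycle $C$) and its singular locus is the single point $c$, by the very construction of the contraction. Next I would invoke Theorem~\ref{H2=0}, which was established in the previous section: for $Y$ obtained by contracting a cycle $C\subset X$ of negative self-intersection in a minimal class VII surface, one has $H^2(Y,\Theta_Y)=0$. With both the structural hypothesis (one isolated singularity on a compact space) and the cohomological hypothesis ($H^2(Y,\Theta_Y)=0$) in place, Manetti's lemma applies and yields the regularity of $\phi$.

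I do not anticipate a serious obstacle: the genuine analytic work has already been carried out in establishing the vanishing theorem \ref{H2=0}, whose proof combined Serre duality for Cohen--Macaulay complex spaces, the isomorphisms of Theorem~\ref{piTh}, and Nakamura's surjectivity result for $H^1(\Theta_X)\to H^1(\Theta_{X,C})$. The present statement is a formal consequence obtained by feeding this vanishing into Manetti's criterion. The only point requiring care is to match conventions: one must interpret ``regular'' germ morphism in the sense of \cite[Definition~1.112]{GLS}, as used in Proposition~\ref{LocGlobSM}, and confirm that Manetti's formulation aligns with this definition --- which it does, since both refer to the standard notion of a smooth morphism of analytic germs.
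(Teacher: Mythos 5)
Your proposal is correct and follows essentially the same route as the paper, which obtains Proposition \ref{prphiregular} exactly by feeding the vanishing $H^2(Y,\Theta_Y)=0$ of Theorem \ref{H2=0} into Manetti's local-to-global criterion \cite[Lemma 1, p.~93]{Ma}. No further comment is needed.
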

 Propositions \ref{LocGlobSM}, \ref{prphiregular} now give:
 \begin{co}\label{LocGlobSmCo}
 Let $X$ be a minimal class VII surface,  $C\subset X$ be a  cycle  with $C^2<0$, and let $Y$ be the singular surface obtained by contracting $C$. Then
 \begin{enumerate}
 \item  $Y$ is smoothable if and only if the singularity $(Y,c)$ is smoothable. \item If the total space $({\cal V},j_0(c))$  of the semi-universal deformation of the singularity $(Y,c)$ is smooth, then the germs $\Def(Y,c)$ and $\Def(Y)$ are smooth, and the total space ${\cal Y}$ of the semi-universal deformation  of $Y$ is smooth in a neighborhood of $Y$.
 \end{enumerate}
	
 \end{co}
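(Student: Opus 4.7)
The plan is to assemble the Corollary directly from the two immediately preceding results. Our hypotheses — $X$ a minimal class VII surface, $C\subset X$ a cycle with $C^2<0$, and $Y$ the singular contraction of $C$ — are verbatim those of Proposition \ref{prphiregular}, so the first step is simply to invoke that proposition and obtain that the natural germ morphism $\phi:\Def(Y)\to\Def(Y,c)$ is regular.

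Once $\phi$ is regular, both parts of Proposition \ref{LocGlobSM} apply without further work, and each part yields precisely the corresponding part of the Corollary. Part (\ref{first}) gives the first claim, namely that $Y$ is smoothable iff the isolated singularity $(Y,c)$ is smoothable. Part (2), combined with the extra assumption that $({\cal V},j_0(c))$ is smooth, gives the second claim: the germs $\Def(Y,c)=(V,v_0)$, $\Def(Y)=(U,u_0)$, and $({\cal Y},i_0(c))$ are all smooth, and ${\cal Y}$ is smooth in a neighborhood of the central fibre $i_0(Y)=p^{-1}(u_0)$.

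Thus the Corollary itself is a purely formal assembly and introduces no new geometric content. The genuine work — and hence the main obstacle in the broader proof chain — lies upstream in Proposition \ref{prphiregular}, whose regularity statement rests on combining Manetti's Lemma 1 (p. 93 of [Ma]) with the vanishing $H^2(Y,\Theta_Y)=0$ supplied by Theorem \ref{H2=0}. That vanishing is the technical heart of Section \ref{VanishSection}, where it is reduced via Serre duality on the Cohen--Macaulay space $Y$, the identification $\pi_*(\Omega_X)=\Omega_Y^{\we}$ of Theorem \ref{piTh}, and the projection formula, to the vanishing $H^2(X,\Theta_X(-C))=0$, which in turn is obtained from Nakamura's results on cycles in class VII surfaces. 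So, for the Corollary as stated, the only verification required is that its hypotheses fit Proposition \ref{prphiregular} — which they do literally.
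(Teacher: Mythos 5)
Your proof is correct and is exactly the paper's own argument: the paper derives Corollary \ref{LocGlobSmCo} by citing Propositions \ref{LocGlobSM} and \ref{prphiregular} in precisely the way you describe, with Proposition \ref{prphiregular} supplying the regularity of $\phi$ needed to apply both parts of Proposition \ref{LocGlobSM}. Your remark that the genuine content lies upstream (in Theorem \ref{H2=0} and Manetti's lemma) matches the paper's structure as well.
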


\section{The Looijenga conjecture and local smoothing of cusps}
\label{LooijSection}

A necessary condition for smoothing cusps is given by J. Wahl \cite[Corollary 4.6, Theorem 5.6]{Wa}:
\begin{pr}  \label{Wahl81} If a cusp $(Y, c)$ is smoothable, then all the smoothing components of $(Y, c)$ have the same dimension $\# \mathrm{Irr}(C)+10 + C^2$.
In particular, 
$$\# \mathrm{Irr}(C)+10 + C^2>0\,.$$ 
\end{pr}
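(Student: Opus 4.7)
The strategy I would pursue follows the two-step approach of Wahl's 1981 paper, since the two citations in the statement are to his Corollary 4.6 and Theorem 5.6 from that work.

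The first step is to establish the dimension formula for smoothing components of an arbitrary Gorenstein isolated surface singularity (Wahl's Corollary 4.6). For any smoothing $q:{\cal V}\to (W,w_0)$ of a Gorenstein singularity $(Y,c)$, one first observes that the smoothing is automatically Gorenstein, so the relative dualising sheaf $\omega_{{\cal V}/W}$ is invertible, and the Milnor fibre $M=q^{-1}(w)$ (for $w$ generic) carries a Steenbrink mixed Hodge structure. The key identity $\mu_+(M)=2p_g(Y,c)$, due to Durfee--Steenbrink--Wahl, identifies the positive part of the intersection form on $H^2(M;\R)$. Combining this with an obstruction analysis comparing $\dim T^1_{(Y,c)}$ with the rank of the obstruction map into $T^2_{(Y,c)}$ along the smoothing direction yields that $\dim W$ equals a universal expression in $p_g(Y,c)$, the self-intersection of the canonical cycle $Z_K$ on the minimal resolution $\pi:X\to Y$, and simple topological invariants of the exceptional locus.

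The second step is to specialise this formula to the cusp case (Wahl's Theorem 5.6). Since a cusp is minimally elliptic one has $p_g=1$, and since it is Gorenstein the canonical cycle $Z_K$ is supported on $C$; adjunction applied to each rational component $C_i$ gives $K_X\cdot C_i=c_i-2$, hence $K_X\cdot C=-C^2$ by the universal formula (\ref{UnivForm}), and a short direct computation expresses $Z_K^2$ in terms of $C^2$ alone. Substituting $p_g=1$ and these values into the general dimension formula of the first step, and using that the reduced cycle $C$ has topological Euler characteristic $0$, the universal expression collapses to exactly $r+10+C^2$ with $r=\#\mathrm{Irr}(C)$.

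For the ``in particular'' clause: by the definition of smoothability, the semi-universal deformation of $(Y,c)$ contains a non-constant holomorphic arc through $v_0$ whose generic fibre is smooth; this arc lands, by definition, in a smoothing component, which therefore has dimension at least $1$, so $r+10+C^2\geq 1>0$. The main obstacle in this programme is the first step: the general dimension formula for Gorenstein smoothing components rests on the full machinery of mixed Hodge structures on vanishing cohomology of a Gorenstein smoothing, together with an explicit control of the obstruction map in $T^2$. The second step, by contrast, is a straightforward adjunction and Euler-characteristic computation on the minimal resolution once the general formula is in hand.
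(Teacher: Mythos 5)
The paper offers no proof of this proposition at all: it is imported verbatim from Wahl, with the two results you identified (Corollary 4.6 and Theorem 5.6 of \cite{Wa}) cited in the statement itself. So there is no internal argument to compare yours against; the only question is whether your reconstruction of Wahl's proof is sound. Your two-step architecture is the correct one, and the specialization step is fine as you describe it: $p_g=1$ since a cusp is minimally elliptic, $K_X\cdot C_i=c_i-2$ by adjunction, $Z_K=C$ (so $Z_K^2=C^2$), and the ``in particular'' clause does follow because a smoothing component is necessarily positive-dimensional, the central fibre being singular. Two caveats, however. First, the entire mathematical weight of the statement sits in your step one --- the uniform dimension formula for smoothing components of a Gorenstein surface singularity --- which you describe but do not prove; as a self-contained argument the proposal therefore has a genuine gap, although it is precisely the gap the paper itself delegates to \cite{Wa}, so you are no worse off than the authors. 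Second, the identity you invoke is misstated: the Durfee--Steenbrink identity for Gorenstein smoothings reads $\mu_0+\mu_+=2p_g$, not $\mu_+=2p_g$. For a cusp the link is a torus bundle over the circle with hyperbolic monodromy, so $\mu_0=b_1(L)=1\neq 0$, and the distinction is not cosmetic: it enters the Euler-characteristic bookkeeping that produces the constant $10$ in the final formula. If you intend to carry the plan out, that is the first place the computation would go wrong.
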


In \cite[(2.11) p. 311]{Lo81} E. Looijenga suggests the following conjecture, which gives a necessary and sufficient smoothability condition: 
\begin{conj} Let $(Y, c)$ be a cusp. Endow the exceptional divisor $C$ of a minimal resolution with an orientation, and suppose that the type $[c_0,\ldots,c_{r-1}]$ of the oriented cycle $C$ belongs to ${\cal T}$.   The singularity $(Y, c)$ is smoothable if and only if there exists a smooth rational surface with an anti-canonical cycle  $D$ whose type $[d_0,\ldots, d_{s-1}]$ is the Hirzebruch-Zagier dual   of $[c_0,\ldots,c_{r-1}]$ (see section \ref{HZsection}).
\end{conj}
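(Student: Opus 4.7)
Suppose $(Y,c)$ is smoothable by a family $q:\mathcal V\to\Delta$. The plan is to analyse the Milnor fibre $F=q^{-1}(\epsilon)\cap B$ for small $\epsilon$ and a Milnor ball $B$. Since cusps are Gorenstein, $\omega_F\simeq\mathcal O_F$, and the boundary $\partial F$ is diffeomorphic to the link of the cusp, which is a $T^2$-bundle over $S^1$ whose monodromy $A\in\mathrm{SL}(2,\mathbb Z)$ is hyperbolic. The conjugacy class of $A$ encodes both $[c_0,\dots,c_{r-1}]$ and its Hirzebruch--Zagier dual via the two periodic continued-fraction expansions of the eigendirections of $A$. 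I would then glue onto $F$ a cycle $D$ of rational curves of type $[d_0,\dots,d_{s-1}]$, using as collar model the neighbourhood of the dual cycle in an Inoue--Hirzebruch surface (which exists, by the same argument as in the proof of Lemma \ref{FirstIsos}). The resulting compact complex surface $\bar F=F\cup D$ carries $D$ as an anti-canonical divisor; projectivity (and hence rationality, by Castelnuovo) follows from $D^2=-C^2>0$, which makes $-K_{\bar F}$ big.

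\textbf{Sufficient direction.} Given a smooth rational surface $V$ with anti-canonical cycle $D$ of type $[d_0,\dots,d_{s-1}]$, the task is to construct a smoothing of the cusp $(Y,c)$ of dual type. The natural approach is the Gross--Hacking--Keel mirror construction: the complement $U:=V\setminus D$ is a log Calabi--Yau surface, and to it one attaches a graded commutative algebra $A$, freely generated as a module by theta functions $\vartheta_m$ indexed by the integral tropicalisation of $(V,D)$, with multiplication $\vartheta_m\,\vartheta_{m'}=\sum_p N_{m,m'}^p\,\vartheta_p$ whose structure constants $N_{m,m'}^p$ are counts of broken lines in a scattering diagram prescribed by $(V,D)$. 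The spectrum of $A$, viewed as a flat family over a parameter ring $A_0$ built from the Picard torus of $V$, yields a deformation whose central fibre carries a single cusp singularity and whose generic fibre is smooth. The identification of this cusp as being of type dual to $[d_0,\dots,d_{s-1}]$ is then a combinatorial verification on the scattering diagram.

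\textbf{Main obstacle.} The hard part is clearly the sufficient direction, and within it the finite generation and flatness of $A$: each structure constant $N_{m,m'}^p$ is \emph{a priori} an infinite sum of broken-line counts, and convergence requires $(V,D)$ to carry enough positivity for the relevant walls to be finite in number. This is where the hypothesis that $V$ be rational and that $D^2>0$ (the mirror condition to $C^2<0$) becomes essential. An alternative approach due to Engel replaces the algebraic construction by a symplectic one, realising the smoothing as an almost-toric Lefschetz fibration whose vanishing cycles are prescribed by the cusp resolution, and then integrating a compatible complex structure. I would attempt the GHK route first, since the scattering diagram carries the Hirzebruch--Zagier duality as a combinatorial shadow so that the identification of the cusp type is automatic; the Engel route is more flexible but requires extra analytic work at the end to certify the singularity type.
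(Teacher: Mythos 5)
The paper does not prove this statement: it is Looijenga's conjecture, and the paper simply invokes it as the theorem of Gross--Hacking--Keel \cite{GHK15} and Engel \cite{En14} (Theorem \ref{ThLooijConj}). So the only fair comparison is between your sketch and the actual published proofs, and against that standard your proposal is an outline of the known strategies rather than a proof. In the sufficient direction you correctly identify the GHK route (theta functions, broken lines, scattering diagrams) and Engel's alternative, but you also correctly identify that the consistency and finiteness of the scattering diagram, the finite generation and flatness of the algebra $A$, and the identification of the central fibre with the cusp of dual type are exactly the hard content --- and you leave all of them open. These are not routine verifications; they occupy the bulk of \cite{GHK15}. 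Labelling them ``the main obstacle'' does not discharge them.

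In the necessary direction there are two concrete problems. First, gluing the Milnor fibre $F$ to a tubular neighbourhood of a dual cycle along a diffeomorphism of the torus-bundle links produces at best a smooth $4$-manifold, not a complex surface; to obtain a complex surface carrying $D$ as an anti-canonical divisor you need a genuinely holomorphic identification, which is why Looijenga's original argument for this direction works inside the semi-universal deformation of the two-cusp contraction of an Inoue--Hirzebruch surface (this is precisely the content of Theorem \ref{LoojThIH} as quoted in the paper) rather than by topological surgery. Second, your positivity claim is false: for Hirzebruch--Zagier dual types one has $-D^2=\sum_{j}(d_j-2)=r=\#\,\mathrm{Irr}(C)$, so $D^2=-r<0$, not $D^2=-C^2>0$; consequently $-K_{\bar F}$ is not big for the reason you give, and the projectivity and rationality of the glued surface require a separate argument (compare Theorem \ref{MinMod}, which rests on Miranda--McEwan and a case analysis on the length of the cycle, not on bigness of the anti-canonical class).
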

 Looijenga's conjecture has been proved by M. Gross, P. Hacking, and S. Keel \cite[Theorem 0.5]{GHK15}; a different proof has been given later  by P. Engel \cite{En14}. Therefore we have
 \begin{thry}\label{ThLooijConj}[Gross-Hacking-Keel-Engel]
  Looijenga's conjecture is true.	
 \end{thry}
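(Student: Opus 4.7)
The theorem asserts an equivalence, so the plan is to address each implication separately. For the easy direction (smoothable $\Rightarrow$ existence of an anti-canonical dual pair), I would follow Looijenga's original approach. Given a one-parameter smoothing $\mathcal{Y}\to\Delta$ of $(Y,c)$, the Milnor fibre $F=\mathcal{Y}_t$ ($t\ne 0$) is a smooth Stein surface whose boundary is diffeomorphic to the link of the cusp. Attaching a cycle of rational curves of the dual type $[d_0,\dots,d_{s-1}]$ to the boundary of $F$ produces a smooth compact surface $V$; its topological invariants (Euler characteristic, signature, $b_1$) are computed from $C^2$, $r$, $s$ and the Milnor number via the signature theorem for cusps, and these force $V$ to be rational. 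The glued cycle $D$ is anti-canonical by adjunction, since its complement $F$ carries a nowhere-vanishing holomorphic $2$-form pulled back from $\mathcal{Y}$.

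For the hard direction (Looijenga pair $\Rightarrow$ smoothability) I would follow the mirror-symmetry strategy of Gross-Hacking-Keel. Given $(V,D)$ with $D$ of the dual type, the goal is to construct a flat family $\mathcal{X}\to\mathrm{Spec}\,\C[[\mathrm{NE}(V)]]$ whose central fibre is the affine coordinate ring of the cusp $(Y,c)$ and whose generic fibre, after specialising the curve-class parameters along a suitable $1$-parameter subscheme, is smooth. The construction proceeds by: (i) building a canonical scattering diagram on $\R^2$ whose walls carry functions encoding relative Gromov-Witten counts of $\A^1$-classes on $(V,D)$; (ii) defining theta functions $\vartheta_m$ indexed by integer points of the tropicalisation of $D$; (iii) extracting structure constants from broken-line counts via $\vartheta_m\vartheta_{m'}=\sum c_{m,m'}^{m''}\vartheta_{m''}$. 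After verifying associativity and flatness, the resulting algebra defines a formal smoothing which is then algebraised to yield an analytic smoothing of a neighbourhood of $c$.

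The main obstacle is precisely the well-definedness, associativity and algebraicity of the scattering/theta-function formalism: one must show the scattering diagram is finite modulo every power of the maximal ideal, that the structure constants are consistent, and that the formal family algebraises to a convergent analytic deformation. An entirely different approach, due to P. Engel, sidesteps these analytic issues: starting from the integral-affine sphere associated with the cusp, one performs explicit nodal-slide surgeries on the tropical sphere — guided by the combinatorial data of the Looijenga pair — to construct a symplectic Milnor fibre directly, and then invokes a smoothability criterion to upgrade this to a complex-analytic smoothing. Either route relies on substantial machinery from tropical/log geometry, and for the purposes of this paper I would simply cite the two completed proofs \cite{GHK15}, \cite{En14} rather than reproduce either argument in detail.
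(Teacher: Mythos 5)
Your proposal ends exactly where the paper does: Theorem \ref{ThLooijConj} is established in the text purely by citation of \cite{GHK15} and \cite{En14}, with no argument reproduced, and your concluding decision to cite those two completed proofs rather than rework them is precisely the paper's approach. The preceding sketches of Looijenga's Milnor-fibre necessity argument and of the Gross--Hacking--Keel and Engel strategies are accurate summaries of the cited works but are not part of the paper's own proof.
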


Using this result the smoothable condition can be checked in an explicit, algorithmic way:  indeed, the  Hirzebruch-Zagier dual type  $[d_0,\ldots, d_{s-1}]$ can be computed easily as explained in \ref{HZsection}. Moreover, as the theorem below shows, the condition ``$[d_0,\ldots, d_{s-1}]$ is the type of an anti-canonical cycle  $D$ in a rational surface" is equivalent to  ``$[d_0,\ldots, d_{s-1}]$ is the type of an anti-canonical cycle  $D$ in a blown up projective plane", and can  be checked algorithmically. Recall from section \ref{HZsection} that we are interested only in cycles $C$ whose type $[c_0,\ldots,c_{r-1}]$ belongs to ${\cal T}$, and the Hirzebruch-Zagier duality defines an involution on this set.
\begin{thry}\label{MinMod} 
Let $S$ be a smooth rational surface with an oriented anti-canonical cycle of rational curves $D$ whose type $[d_0,\ldots, d_{s-1}]$ belongs to ${\cal T}$. Then $S$ admits $\P^2$ as minimal model.
\end{thry}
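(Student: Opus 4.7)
The plan is to argue by induction on $\rho(S)=10-K_S^2$, iteratively contracting $(-1)$-curves on $S$ while tracking the evolution of the anti-canonical cycle. By the universal formula~(\ref{UnivForm}) applied to $D$, $K_S^2 = D^2 = -\sum_{i=0}^{s-1}(d_i-2)\le -1$, since $[d_0,\ldots,d_{s-1}]\in \mathcal{T}$ forces at least one $d_j\ge 3$. Hence $S$ is not a minimal rational surface, and its minimal models lie in $\{\P^2\}\cup\{\F_n:n=0\text{ or }n\ge 2\}$. The goal is to exhibit $\P^2$ among them, i.e., to construct a birational morphism $S\to \P^2$.

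The first observation is that every $(-1)$-curve $E\subset S$ satisfies $E\cdot D=E\cdot(-K_S)=1$ by adjunction, and since every component $D_i$ of $D$ has $D_i^2=-d_i\le -2$, no component of $D$ is a $(-1)$-curve. Thus $E\not\subset D$ meets $D$ transversally in a single smooth point of exactly one component $D_i$. Contracting $E$ via $\pi:S\to S'$ produces a smooth rational surface with anti-canonical divisor $D':=\pi_*(D)\in |-K_{S'}|$, still an oriented cycle of $s$ rational curves whose type is obtained from $[d_0,\ldots,d_{s-1}]$ by replacing $d_i$ with $d_i-1$.

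Three cases then arise. (A) The new type stays in $\mathcal{T}$; the induction hypothesis applied to $(S',D')$ yields a birational morphism $S'\to \P^2$, whence $S\to S'\to \P^2$. (B) The new type degenerates to $[2,\ldots,2]$; then $D'$ is a cycle of $(-2)$-curves with $(D')^2=K_{S'}^2=0$, and the anti-canonical pencil endows $S'$ with the structure of a rational elliptic surface having $D'$ as a singular fiber of Kodaira type $I_s$, so by the classical structure theorem $S'$ is a blow-up of $\P^2$ at the nine (possibly infinitely near) base points of a cubic pencil, admitting $\P^2$ as minimal model. (C) Some entry $d_i-1$ equals $1$; the image of $D_i$ in $S'$ is then itself a $(-1)$-curve, which one contracts in a further step, shortening the cycle by one and incrementing each neighbour's self-intersection by~$1$. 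Iterating case~(C) eventually lands in case~(A) or case~(B), since $\rho(S)$ strictly decreases at every contraction.

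The hardest part will be the combinatorial bookkeeping in case~(C): cycle length, types, and neighbour self-intersections all evolve under successive contractions, and one must check that the process cannot stall in a configuration that neither returns to $\mathcal{T}$ nor degenerates to the all-$2$ cycle. A cleaner conceptual alternative would be to appeal to the structure theory of smooth anti-canonical rational surfaces (Looijenga pairs), which classifies those with $D^2<0$ as iterated blow-ups of $\P^2$, thereby bypassing the case analysis above.
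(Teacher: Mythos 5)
Your reduction steps are sound --- no component of $D$ is a $(-1)$-curve, any $(-1)$-curve $E$ satisfies $E\cdot D=1$ and meets $D$ at a single smooth point of one component, and contracting $E$ lowers the corresponding entry of the type by one --- but the two terminal cases, which carry all the content of the theorem, are left with genuine gaps. In case (B) you claim that a rational surface $S'$ with $K_{S'}^2=0$ and an anti-canonical cycle $D'$ of type $[2,\dots,2]$ is a rational elliptic surface. This presupposes $h^0(-K_{S'})\geq 2$, which does not follow from the hypotheses: Riemann--Roch only gives $h^0(-K_{S'})=1+h^1(-K_{S'})$, and there exist anti-canonical pairs with $(D')^2=0$ and $h^0(-K_{S'})=1$ (blow up nine general points of a nodal cubic, or three general points on each line of a triangle of lines; the restriction ${\cal O}_{D'}(D')$ is then a non-torsion element of $\Pic^0(D')$ and no pluri-anti-canonical system moves). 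Such a surface admits no elliptic fibration, so the structure theorem you invoke is unavailable and case (B) requires a different argument.

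Case (C) is the more serious gap, and it is exactly where the hypothesis $[d_0,\dots,d_{s-1}]\in{\cal T}$ must enter beyond the initial inequality $K_S^2\leq-1$. First, your bookkeeping rule fails for short cycles: for $s=2$ the two components meet twice, so contracting a $(-1)$-component raises its neighbour's self-intersection by $4$. Starting from $[3,2]\in{\cal T}_2$, contracting a $(-1)$-curve through the $(-2)$-component and then that component itself leaves a nodal anti-canonical curve of self-intersection $+1$, i.e.\ type $[1]$: neither in ${\cal T}$ nor equal to $[2,\dots,2]$, with no contractible component left in the cycle. So the iteration does \emph{not} always land in case (A) or (B), and from such a configuration combinatorial bookkeeping alone cannot exclude terminating at $\F_n$ with $n\geq 2$, or at a one-point blow-up of $\F_n$ centred on the negative section --- surfaces which genuinely do not admit $\P^2$ as minimal model. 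Your closing suggestion to quote the structure theory of anti-canonical pairs is essentially what the paper does (McEwan's consequence of Miranda's theorem \cite{McE90} for $s\geq4$, Looijenga's Theorem 1.1 \cite{Lo81} for $s\leq 3$, plus a short argument excluding $\P^1\times\P^1$ when $s=2$), but your paraphrase of that theory is too strong: $D^2<0$ alone does not imply that $S$ dominates $\P^2$. For instance, blowing up eleven points of the negative section of $\F_n$, $n\geq 2$, lying on distinct fibres, yields a surface with an anti-canonical cycle of self-intersection $-3$ all of whose chains of contractions end at some $\F_m$ with $m\geq n$. What the structure theory actually requires is the negativity of every individual component of the cycle, which is precisely what membership in ${\cal T}$ encodes.
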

\begin{proof}
For $s\geq 4$ this follows from \cite[Corollary 2.2]{McE90}, which is consequence of a theorem of Miranda \cite[Theorem 2.1]{McE90}. Note that the condition $[d_0,\ldots, d_{s-1}]\in {\cal T}$ is essential. Indeed, $\P^1\times\P^1$ contains the anti-canonical cycle $\P^1\times\{0,\infty\}\cup \{0,\infty\}\times\P^1$, but it cannot be contracted to $\P^2$.

For $s\in\{1,3\}$ the claim follows directly from \cite[Theorem 1.1]{Lo81}. Suppose now   $s=2$. The same theorem shows that $S$ contains a finite union $E$ of disjoint exceptional curves, each having no component in common with $D$, such that contracting these curves gives  the pair  $(\P^1\times\P^1, \bar D)$, where  $\bar D=\bar D_0+\bar D_1$ is the union of two distinct rational curves   of bidegree (1,1) and $\bar D_i$ is the image of $D_i$.  Since  $[d_0,d_{1}]\in {\cal T}$	and $\bar D_i^2=2$, $E$ is not empty. But  blowing up a point of $\P^1\times\P^1$  gives already a surface which admits $\P^2$ as minimal model.
\end{proof}

%\begin{re}\label{BlDown}
%Let $S$ be a smooth compact surface,  $\pi:\hat S\to S$ be the blown up of $S$ at a point $p\in S$, and $E$ be the exceptional curve. Let $D\subset \hat S$ be an anti-canonical cycle of rational curves. Then $\Delta:=\pi(D)$ is an anti-canonical cycle of rational curves on $S$, and $(\hat S,D)$ is obtained from $(S,\Delta)$ by blowing up a point $p\in \Delta$. If $p\in \Delta\setminus\Sing(\Delta)$, then $D$ is just the strict transform $\widetilde{\Delta}$ of $\Delta$. If $p\in\Sing(\Delta)$ then $D=\widetilde{\Delta}+E$.
%\end{re}
%\begin{proof}
%Since $D$ is anti-canonical, it follows that $E\cdot D=1$, so either $E$ intersects $D$ transversally at a smooth point, or $E$ is an irreducible component of $D$. In both cases one has ${\cal O}_{\hat S}(D)\simeq\pi^*({\cal O}_{S}(\Delta))(-E)$, so 	$\pi^*({\cal O}_{S}(\Delta))\simeq {\cal K}_{\hat S}^\smvee(E)\simeq \pi^*({\cal K}_{S}^\smvee)$. Since the morphism $\pi^*:\Pic(S)\to \Pic(\hat S)$ is injective, it follows that $\Delta$ is anti-canonical. Moreover, in both cases $\Delta$ is a cycle of rational curves.
%\end{proof}

Taking into account Theorem \ref{MinMod}, and \cite[Lemmata 3.2, 3.3, 3.4 ]{FrMi} we obtain the following important result which shows that the condition ``$[d_0,\ldots, d_{s-1}]$ is the type of an anti-canonical cycle on a smooth rational surface"  intervening in Looijenga's conjecture is decidable algorithmically: 
\begin{co}
Let $S$ be a smooth rational surface with an oriented anti-canonical cycle of rational curves $D$ of type  $[d_0,\ldots, d_{k-1}]\in {\cal T}$.  There exists a finite sequence  
$$(S^k,D^k)_{1\leq k\leq m}$$
 of smooth rational surfaces  with an	anti-canonical cycles such that 
\begin{enumerate}
\item $(S^m,D^m)\simeq (S,D)$.
\item $S^1=\P^2$ and $D^1$ is either the union of three general lines, or the union of a conic and a line in general position, or a cubic with a nodal singularity.
\item For $1\leq k\leq m-1$ the pair $(S^{k+1},D^{k+1})$ is obtained from $(S^{k},D^{k})$ as  follows:
\begin{enumerate}
\item $S^{k+1}$ is the blown up of $S^{k}$ at a point $p^{k}\in D^{k}$.
\item $D^{k+1}$ is the strict transform $\widetilde D^{k}$ of $D^k$ if $p^{k}\in D^{k}\setminus\Sing(D^{k})$, and $D^{k+1}=\widetilde D^{k}+E^{k+1}$ if $p^{k}\in  \Sing(D^{k})$, where $E^{k+1}$ is the exceptional curve of  $S^{k+1}\to S^{k}$.
\end{enumerate}	
\end{enumerate}

\end{co}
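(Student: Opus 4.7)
The plan is to induct on the Picard number $\rho(S)$, equivalently on the minimal number of blow-ups required to produce $S$ from $\P^2$. By Theorem \ref{MinMod}, the surface $S$ admits $\P^2$ as minimal model, so such a finite blow-down sequence exists; the task is to realize it as a chain of contractions compatible with the anti-canonical cycle structure on $D$.

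For the base case $S = \P^2$, an anti-canonical divisor $D \in |{-K_{\P^2}}| = |{\cal O}(3)|$ is a plane cubic. The constraints that $D$ is a cycle of rational curves with type $[d_0,\ldots,d_{k-1}]\in{\cal T}$ (so reduced, and not all self-intersections equal $2$) combined with the classification of plane cubics leave exactly the three configurations stated in (2): three lines in general position, a smooth conic meeting a line transversally in two points, or an irreducible nodal cubic.

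For the inductive step, assume $S \ne \P^2$. The Friedman--Miranda Lemmata 3.2, 3.3, 3.4 cited in the statement provide the key combinatorial input: given a pair $(S, D)$ with $D$ an anti-canonical cycle of type in ${\cal T}$ and $S$ non-minimal, one can find a $(-1)$-curve $E \subset S$ whose contraction $\beta : S \to \bar S$ produces a new pair $(\bar S, \bar D)$ with $\bar D$ still an anti-canonical cycle of type in ${\cal T}$. Two cases arise. If $E$ is not a component of $D$, then adjunction combined with $E \cdot E = -1$ gives $E \cdot D = E \cdot (-K_S) = 1$, so $E$ meets $D$ transversally at a single smooth point, and one sets $\bar D := \beta(D)$. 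If $E$ is a component of $D$, then $E$ meets its two cycle neighbours exactly once each, and $\beta$ identifies those two intersection points into a new node of $\bar D$, decreasing the number of components by one. Applying the inductive hypothesis to $(\bar S, \bar D)$ and prepending the blow-up $\beta^{-1}$ at the appropriate point of $\bar D$ (the smooth point or the newly created node, respectively) yields a sequence for $(S, D)$ matching description (3).

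The main obstacle, handled precisely by the Friedman--Miranda lemmata, is the existence of a $(-1)$-curve whose contraction preserves the anti-canonical cycle structure; one must rule out configurations in which every $(-1)$-curve meets $D$ pathologically (for instance, passing through an existing node of $D$ or meeting a component of $D$ non-transversally), which would destroy the cycle condition on $\bar S$. The hypothesis $[d_0,\ldots,d_{k-1}]\in{\cal T}$ is essential here: as the proof of Theorem \ref{MinMod} already illustrates with $\P^1\times\P^1$, surfaces whose anti-canonical cycle type lies outside ${\cal T}$ need not admit $\P^2$ as minimal model, and the descent argument would break down at the very first step. Once a compatible contraction is produced, $\rho(S)$ strictly drops, so the induction terminates in finitely many steps.
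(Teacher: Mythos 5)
Your overall strategy --- descending to $\P^2$ by contracting $(-1)$-curves compatibly with the cycle, with Theorem \ref{MinMod} guaranteeing that $\P^2$ is a minimal model and the Friedman--Miranda lemmata supplying the compatible contractions --- is exactly the route the paper intends (it states the corollary as a direct consequence of Theorem \ref{MinMod} and \cite[Lemmata 3.2, 3.3, 3.4]{FrMi} without writing out the induction). But the induction as you have formulated it does not close. You induct over pairs $(S,D)$ whose type lies in ${\cal T}$ and assert that the contracted pair $(\bar S,\bar D)$ again has type in ${\cal T}$; this is false in general. Membership in ${\cal T}$ requires every entry $d_i=-D_i^2$ to be at least $2$, whereas blowing down a $(-1)$-curve that meets a component $D_i$ transversally at a smooth point raises $D_i^2$ by one, so an entry equal to $2$ becomes $1$ and the type leaves ${\cal T}$ immediately; the intermediate cycles of Example \ref{eclatementsantican}, with types such as $[1,2,\dots,2,k-7]$, illustrate this. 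Worse, your base case is vacuous: the three anti-canonical cycles on $\P^2$ have types $[-1,-1,-1]$, $[-4,-1]$ and $[-7]$, none of which lies in ${\cal T}$, so an induction whose running hypothesis includes membership in ${\cal T}$ can never terminate at statement (2). As written, the argument fails at the first contraction that produces a component of self-intersection $\geq -1$.

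The repair is to widen the inductive class: run the descent over \emph{all} pairs $(S,D)$ with $D$ an anti-canonical cycle of rational curves, with no constraint on the type. This is what \cite[Lemmata 3.2, 3.3, 3.4]{FrMi} actually provide: every such pair is obtained by the blow-up process of statement (3) from a minimal pair, the minimal pairs being either $(\P^2,D^1)$ with $D^1$ as in (2) or pairs supported on $\P^1\times\P^1$ or a Hirzebruch surface. The hypothesis $[d_0,\ldots,d_{k-1}]\in{\cal T}$ is then invoked only once, for the original pair, through Theorem \ref{MinMod}, to exclude the non-$\P^2$ endpoints (after the extra blow-up used in the $s=2$ case of that proof). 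Your remaining steps --- the adjunction computation giving $E\cdot D=1$, hence a transversal meeting at a smooth point when $E\not\subset D$, and the separate treatment of the case where $E$ is a component of $D$ --- are correct and match the intended dichotomy between interior and corner blow-ups.
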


\begin{ex} \label{eclatementsantican}  
We start with a singular cubic curve  with a single node $D^1_0\subset S^1:=\P^2(\C)$. We obtain a sequence $(S^k,D^k=\sum_{i=0}^{k-1} D^k_i)$ of rational surfaces with anti-canonical cycles  in the following way:
\begin{enumerate}
\item $S^2$ is obtained from $S^1$ by blowing up the double point of $D^1=D^1_0$, and $D^2=D^2_0+D^2_1$ where  $D^2_0$ is the exceptional curve, and $D^2_1$  is the strict transform of $D^1_0$.
\item $S^{k+1}$ is obtained from $S^{k}$ by blowing up a point belonging to the intersection of the strict transform of $D^1_0$ with the exceptional curve of $S^{k}\to S^{k-1}$, $D^k_0$ is the exceptional curve of  $S^{k+1}\to S^{k}$, and $D^{k+1}_{k}$ is the strict transform  of $D^1_0$.
\end{enumerate}
The type of $D^1$ is $[-7]$, and for $k\geq 2$ the type of $D^k$ is $[1,2,\dots 2, k-7]$.   Blowing up at suitable smooth points of $D^k$  we see that, for $1\leq k\leq 10$, any type $[d_0,\dots,d_{k-1}]\in {\cal T}_k$ is the type of an anti-canonical cycle in a smooth rational surface.
\end{ex}

\begin{ex} Consider the cycle $C=C_0+\cdots+C_{r-1}\subset X$ of $r\ge 1$ rational curves in $X$ of type $(3,2,\ldots,2)$, and $(Y,c)$ be the corresponding cusp. With the notations of section \ref{HZsection} we have $s=\sum_{i=0}^{r-1}(c_i-2)=1$ and the dual type is $[r+2]$, so it corresponds to a cycle $D$ consisting of a single singular rational curve with a single node and self-intersection $-r$.  Let $(Y^\smvee,d)$ be the cusp obtained by contracting such a cycle $D$ in a smooth surface $X^\smvee$.
By Wahl's theorem \ref{Wahl81}, a necessary condition for the cusp $(Y^\smvee,d)$ to be smoothable is   $r\le 10$, so for $r\ge 11$, $(Y^\smvee,d)$ is not smoothable. In fact by Theorem \ref{condsufflissage}, this cusp is smoothable if and only if $1\le r\le 10$. Note that $(Y,c)$ is  smoothable for any $r\in\N^*$.
\end{ex}

We can state now

\begin{thry} \label{condsufflissage} Let $X$ be a minimal class  VII surface,  $C\subset X$ be a cycle of rational curves with $C^2<0$,  $[c_0,\dots,c_{r-1}]$ be its type, and $(Y,c)$ be the singularity obtained by contracting $C$. 
\begin{enumerate}
\item If $\sum_{i=0}^{r-1} (c_i-2)\leq 10$ the singularity $(Y,c)$ is smoothable.
\item The condition $\sum_{i=0}^{r-1} (c_i-2)\leq 10$ is satisfied in the following two cases
\begin{enumerate}[(i)]
\item $r<b_2(X)$ and $b_2(X)\leq 11$.
\item $r=b_2(X)$ and $b_2(X)\leq 10$.
\end{enumerate}	
Therefore in these cases $(Y,c)$ is always smoothable.
\end{enumerate}
  \end{thry}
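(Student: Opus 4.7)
My plan for Theorem \ref{condsufflissage} combines the Gross--Hacking--Keel--Engel theorem (Theorem \ref{ThLooijConj}) with the explicit construction of Example \ref{eclatementsantican}, and then verifies the numerical inequality $\sum_{i=0}^{r-1}(c_i-2)\le 10$ in the two ranges of (2).

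For (1), I set $s:=\sum_{i=0}^{r-1}(c_i-2)=-C^2$, which is $\ge 1$ since $C^2<0$. The hypothesis $[c_0,\ldots,c_{r-1}]\in\mathcal{T}_r$ combined with the explicit Hirzebruch--Zagier recipe of section \ref{HZsection} forces $[d_0,\ldots,d_{s-1}]\in\mathcal{T}_s$: writing $[c_0,\ldots,c_{r-1}]=[a_0+2,\underbrace{2,\ldots,2}_{b_0-1},\ldots,a_{k-1}+2,\underbrace{2,\ldots,2}_{b_{k-1}-1}]$ with $a_i,b_i\ge 1$, the dual sequence contains each $b_i+2\ge 3$. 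When $s\le 10$, Example \ref{eclatementsantican} produces a smooth rational surface bearing an anti-canonical cycle of type $[d_0,\ldots,d_{s-1}]$, and Theorem \ref{ThLooijConj} then delivers the smoothability of $(Y,c)$.

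For (2)(ii), Remark \ref{HalfInoue}(4)(b) yields $\sum(c_i-2)=-C^2=r=b_2(X)\le 10$ at once.

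The principal work is (2)(i): assuming $r<b_2(X)\le 11$, one must show $-C^2\le 10$. The step I propose to establish is the sharper structural inequality $-C^2\le b_2(X)-r$, which combined with $r\ge 1$ immediately gives $-C^2\le b_2(X)-1\le 10$. The underlying geometric picture is that, away from the half-Inoue case, each of the $-C^2$ branches attached to the cycle contributes a distinct non-cycle rational curve, so that $r+(-C^2)$ is bounded by the total number of rational curves on $X$, itself bounded by $b_2(X)$. For Kato surfaces this is immediate from the iterated blow-up construction and the standard count of rational curves. The main obstacle I anticipate is extending the bound to unknown class VII surfaces: here the idea is to appeal to Nakamura's deformation-in-large theorem (so that the topological and lattice-theoretic data of the cycle are inherited from a Kato-type model), together with Remark \ref{HalfInoue}(3)--(4) to exclude the half-Inoue case. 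Once this inequality is in hand, combining parts (2)(i) and (2)(ii) with part (1) finishes the proof.
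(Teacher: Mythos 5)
Your treatment of (1) and of (2)(ii) matches the paper's: for (1) you combine Theorem \ref{ThLooijConj} with the observation that the Hirzebruch--Zagier dual has length $s=\sum(c_i-2)=-C^2$ and lies in ${\cal T}_s$, so that Example \ref{eclatementsantican} supplies the required anti-canonical cycle when $s\leq 10$; for (2)(ii) you invoke Remark \ref{HalfInoue}(4)(b). Both steps are correct and are exactly the paper's argument.

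The gap is in (2)(i). You correctly identify that the whole burden is the inequality $-C^2\leq b_2(X)-r$, but you do not prove it, and the route you sketch would not close it. The paper gets this (in fact as an \emph{equality} $-C^2=b_2(X)-r$ whenever $r<b_2(X)$) from \cite[Corollary 2.36, Remark 2.37]{DlJAMS}: combining the adjunction identity $c_1({\cal K}_X)c_1({\cal O}(C))+c_1({\cal O}(C))^2=0$ for a cycle with Dloussky's lattice-theoretic description of the classes $[C_i]$ in the negative definite unimodular lattice $H^2(X,\Z)/\mathrm{Tors}$ (where $c_1({\cal K}_X)=\sum_i e_i$ modulo torsion), one obtains $-b_2(X)-C^2=-r$. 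This is a nontrivial theorem valid for \emph{arbitrary} minimal class VII surfaces, proved by lattice/Donaldson-type arguments, not by exhibiting curves. Your heuristic --- that each of the $-C^2$ ``branches'' attached to the cycle contributes a distinct rational curve, so $r+(-C^2)$ is bounded by the number of rational curves, itself bounded by $b_2(X)$ --- presupposes the existence of those branch curves, which is part of the explicit structure of Kato surfaces and is precisely what is \emph{unknown} for a general class VII surface: a priori such a surface could carry no curves besides the cycle. Nor does Nakamura's deformation-in-large theorem \cite{NaToh} rescue this: it places $X$ in a family degenerating from blown-up Hopf surfaces, but deformation in the large does not transport curve configurations or the classes $[C_i]$ from the generic fibre to $X$. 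To repair the step, replace the curve count by the citation of \cite[Corollary 2.36, Remark 2.37]{DlJAMS} (or reprove that identity); with $-C^2=b_2(X)-r$ and $r\geq 1$ you then get $s\leq b_2(X)-1\leq 10$ as desired. (Note also that your inequality $-C^2\leq b_2(X)-r$ fails in the half-Inoue case $r=b_2(X)$, where $-C^2=b_2(X)$; it is safe only because you use it solely when $r<b_2(X)$.)
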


\begin{proof} (1) This follows from Theorem \ref{ThLooijConj},  Corollary \ref{LocGlobSmCo} and Example \ref{eclatementsantican}, taking into account  that the length of the dual of a type $[c_0,\dots,c_{r-1}]\in {\cal T}_r$ is 
$$s=\sum_{i=0}^{r-1} (c_i-2)=-C^2$$
 (see   section \ref{HZsection}). For (2)   note that $s<b_2(X)$ if $r<b_2(X)$, and $s=b_2(X)$ if  $r=b_2(X)$.
	
\end{proof}

\section{Smooth  deformations of singular contractions of class VII surfaces }

In this section we prove a general  smoothability criterion for the singular surface $Y$ obtained by contracting a cycle in a minimal class VII surface, and, in the smoothable case, we identify the smooth surfaces $Y'$ which appear as small deformations of $X$.

 Note that these results  are  valid for {\it unknown} minimal class VII surfaces: one just needs the existence of a cycle, not the existence of global spherical shell. In other words we do not assume that $X$ is a Kato surface.  
\begin{lm}\label{Lemmh0}
Let $X$ be minimal class VII surface, and $C\subset X$ be a  cycle of $r$ rational curves with $C^2<0$. 
\begin{enumerate}	
\item If $r< b_2(X)$ then $h^0({\cal K}_X(C)^{\otimes n})=0$ for any $n\in\N^*$.
\item If $r= b_2(X)$ then $X$ is a half Inoue surface, and
$$h^0({\cal K}_X(C)^{\otimes n})=\left\{
\begin{array}{ccc}
1 & \rm if & n\in 2\N\\
0& \rm if & n\in 2\N+1
\end{array}\right.\ .
$$
\end{enumerate}
\end{lm}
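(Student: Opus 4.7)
The proof revolves around the line bundle ${\cal K}_X(C)$ and its powers. Two foundational computations underpin it: Noether's formula (using $\chi({\cal O}_X)=0$ for class VII surfaces) gives ${\cal K}_X^2=-b_2(X)$, and adjunction on the arithmetic genus $1$ cycle $C$ gives $({\cal K}_X+C)\cdot C_i=0$ for each component $C_i$, hence also $({\cal K}_X+C)\cdot C=0$. Consequently $c_1({\cal K}_X(C))^2=-b_2(X)-C^2$, which by negative-definiteness of the (torsion-free) intersection form on $H^2(X,\Z)$ is $\leq 0$, with equality if and only if $[{\cal K}_X(C)]\in\Pic^0(X)$.

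\textbf{Case (2), $r=b_2(X)$:} Remark~\ref{HalfInoue} gives that $X$ is half-Inoue and that $[{\cal K}_X(C)]$ has order $2$ in $\Pic^0(X)$. Thus ${\cal K}_X(C)^{\otimes 2m}\simeq {\cal O}_X$ yields $h^0=1$, while ${\cal K}_X(C)^{\otimes(2m+1)}\simeq {\cal K}_X(C)$ is a nontrivial $2$-torsion line bundle. For the latter I would argue $h^0=0$ as follows: a hypothetical nonzero section $s$ would give $s^{\otimes 2}\in H^0({\cal O}_X)=\C$ a nonzero constant, forcing $s$ to be nowhere vanishing and therefore ${\cal K}_X(C)\simeq {\cal O}_X$, contradicting non-triviality.

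\textbf{Case (1), $r<b_2(X)$:} The plan is to show $h^0({\cal K}_X^{\otimes n}(nC))=0$ for all $n\geq 1$. First, using Remark~\ref{HalfInoue} (the half-Inoue case being excluded), one has $-C^2<b_2(X)$ strictly; hence $c_1({\cal K}_X(C)^{\otimes n})\neq 0\in H^2(X,\Z)$ and ${\cal K}_X(C)^{\otimes n}$ is never torsion for $n\geq 1$. Supposing for contradiction that a nonzero $s\in H^0({\cal K}_X^{\otimes n}(nC))$ exists, set $D:=(s)\neq 0$ and decompose $D=\sum_{i} m_i C_i+E$ with $m_i\geq 0$ and $E$ an effective divisor having no common component with $C$. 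The identity $D\cdot C_i=n({\cal K}_X+C)\cdot C_i=0$ together with $E\cdot C_i\geq 0$, combined with the fact that $-M$ (the negative of the intersection matrix of $C$) is an invertible $M$-matrix and hence $(-M)^{-1}$ has non-negative entries, yields $\vec m=(-M)^{-1}(E\cdot C_i)_i\geq 0$; in particular if $E=0$ then $\vec m=0$, so $D=0$ and ${\cal K}_X(C)^{\otimes n}\simeq {\cal O}_X$, contradicting non-triviality.

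The main obstacle is to rule out the possibility $E\neq 0$, that is, the existence of an effective divisor on $X$ with irreducible components outside $C$ realizing the numerical class $n[{\cal K}_X+C]-\sum_{i} m_i[C_i]$ and intersecting $C$ compatibly. For arbitrary (possibly unknown) minimal class VII surfaces, this step appears to require either invoking the structure theory of curves and the Picard group along the lines of Nakamura~\cite{NaInv} in order to control irreducible curves outside the cycle, or alternatively translating the statement via the isomorphism $h^0({\cal K}_X^{\otimes n}(nC))=h^0(Y,\omega_Y^{\otimes n})$ from Theorem~\ref{piTh} and arguing directly that the plurigenera of the singular contraction $Y$ vanish when $r<b_2(X)$, exploiting the vanishing results (such as Corollary~\ref{h2Y}) already established in the paper.
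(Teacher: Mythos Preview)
Your treatment of part (2) is correct and coincides with the paper's.

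For part (1) there is a genuine gap, which you yourself flag: the $M$-matrix argument against the $C_i$ cannot exclude $E\neq 0$, and neither of the two detours you propose is what the paper actually does. The paper's argument is a one-line application of a numerical criterion of Nakamura. Observe first that the number $-b_2(X)-C^2$ which you computed as $c_1({\cal K}_X(C))^2$ is \emph{also} equal to $c_1({\cal K}_X(C))\cdot c_1({\cal K}_X)$, since the adjunction relation $({\cal K}_X+C)\cdot C=0$ forces these two intersection numbers to coincide. By \cite[Corollary 2.36, Remark 2.37]{DlJAMS} this quantity equals $-r$ when $r<b_2(X)$, so $c_1({\cal K}_X(C)^{\otimes n})\cdot c_1({\cal K}_X)=-nr<0$ for every $n\geq 1$. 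Now \cite[Lemma 1.1.3]{NaToh} asserts that on a minimal class VII surface a line bundle $L$ with $c_1(L)\cdot c_1({\cal K}_X)<0$ satisfies $h^0(L)=0$; equivalently, every effective divisor has non-negative canonical degree. This kills $h^0({\cal K}_X(C)^{\otimes n})$ immediately, with no need to decompose a hypothetical section or to control curves outside $C$.

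The missing idea, then, is simply to intersect with ${\cal K}_X$ rather than with the $C_i$ or with ${\cal K}_X(C)$ itself, and to invoke Nakamura's criterion, which is valid on all minimal class VII surfaces, known or unknown. Your self-intersection computation was already the right number; you only needed to reinterpret it as an intersection with the canonical class.
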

\begin{proof}
(1) Since $C$ is a cycle, we have ${\cal K}_X(C)_C=\omega_C\simeq {\cal O}_C$, so 
\begin{equation}\label{CycleId}
c_1({\cal K}_X)c_1({\cal O}(C))+c_1({\cal O}(C))^2=0\,.	
\end{equation}
Using \cite[Corollary 2.36, Remark 2.37]{DlJAMS} and $c_1(X)^2=-b_2(X)$, we obtain
$$c_1({\cal K}_X(C))c_1({\cal K}_X)=c_1({\cal K}_X)^2+c_1({\cal K}_X)c_1({\cal O}(C))=-b_2(X)-C^2=$$
$$=\left\{\begin{array}{ccc}
-r&\rm if &r<b_2(X)\\
0 &\rm if & r=b_2(X)	
\end{array}\right.\,.
$$
The claim follows now from \cite[Lemma 1.1.3]{NaToh}.\\ \\
(2) This is a consequence of Remark \ref{HalfInoue} (\ref{r=b2}).
\end{proof}

\begin{thry} \label{lissageglobal}
Let $X$ be a minimal class VII surface,  $C\subset X$ be a  cycle of $r$ rational curves with $C^2<0$, $[c_0,\ldots,c_{r-1}]$ be its type, and  $(Y,c)$ be the singular contraction	 of $(X,C)$.    Then $r\leq b_2(X)$ and
\begin{enumerate}
\item $Y$ is smoothable if and only if the  dual  $[d_0,\ldots, d_{s-1}]$ of  $[c_0,\ldots,c_{r-1}]$ is the type of an anti-canonical cycle in  a smooth rational surface which admits $\P^2$ as minimal model. This condition is always satisfied when $\sum_{i=0}^{r-1} (c_i-2)\leq 10$.  	
\item If $r<b_2(X)$, then  any smooth deformation $Y'$  of $Y$ is a rational surface with 
$$b_2(Y')=10+b_2(X)+C^2=10+r\,.$$
\item If $r=b_2(X)$, then	 $X$ is a half-Inoue surface, and  any smooth deformation $Y'$ of $Y$ is an Enriques surface.
\end{enumerate}
\end{thry}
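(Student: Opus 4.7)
Part (1) packages results already assembled in this paper: Corollary \ref{LocGlobSmCo}(1) reduces the smoothability of $Y$ to that of the cusp $(Y,c)$; Theorem \ref{ThLooijConj} (Gross--Hacking--Keel--Engel) characterises cusp smoothability by the existence of a smooth rational surface containing an anti-canonical cycle of the Hirzebruch--Zagier dual type; and Theorem \ref{MinMod} pins the minimal model of that rational surface to $\P^2$. The sufficient numerical condition $\sum_{i=0}^{r-1}(c_i-2)\le 10$ is Theorem \ref{condsufflissage}(1), and the bound $r\le b_2(X)$ is Remark \ref{HalfInoue}(1).

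For (2) and (3), my plan is to pick a flat proper smoothing $\mathcal{Y}\to U$ with $Y_0=Y$ and smooth general fibre $Y_t=Y'$, provided by (1). Cusps are Gorenstein, so $\omega_{\mathcal{Y}/U}$ is invertible with $\omega_{\mathcal{Y}/U}|_{Y_t}=\omega_{Y_t}$; consequently upper semicontinuity applies to $h^i(Y_t,\omega_{Y_t}^{\otimes n})$ while $\chi(Y_t,\omega_{Y_t}^{\otimes n})$ is constant in $t$. The Leray spectral sequence for $\pi\colon X\to Y$, together with $\chi(\mathcal{O}_X)=0$ and Corollary \ref{h2Y}, yields $h^1(\mathcal{O}_Y)=h^2(\mathcal{O}_Y)=0$ and $\chi(\mathcal{O}_Y)=1$; semicontinuity then forces $q(Y')=p_g(Y')=0$. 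The central input for the remaining plurigenera is the comparison
\[
h^0\bigl(Y',\omega_{Y'}^{\otimes n}\bigr)\le h^0\bigl(Y,\omega_Y^{\otimes n}\bigr)=h^0\bigl(X,\mathcal{K}_X(C)^{\otimes n}\bigr),
\]
where the equality is Theorem \ref{piTh}(1)(a) and the right-hand side is evaluated by Lemma \ref{Lemmh0}.

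If $r<b_2(X)$, Lemma \ref{Lemmh0}(1) kills every plurigenus of $Y'$ from $n=1$ onwards, so $\kappa(Y')=-\infty$; combined with $q(Y')=0$, the Enriques--Kodaira classification forces $Y'$ to be rational (class VII is excluded by $q=0$). Riemann--Roch on Gorenstein surfaces gives $K^2=\chi(\omega^{\otimes 2})-\chi(\mathcal{O})$, which is preserved by the flat family, so $K_{Y'}^2=K_Y^2=(K_X+C)^2$. Using $C\cdot(K_X+C)=\deg\omega_C=0$ and the intersection computation in the proof of Lemma \ref{Lemmh0} (based on \cite[Cor.~2.36]{DlJAMS}), $(K_X+C)^2=c_1(\mathcal{K}_X)\cdot c_1(\mathcal{K}_X(C))=-r$, and the rational identity $b_2=10-K^2$ delivers $b_2(Y')=10+r=10+b_2(X)+C^2$. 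If $r=b_2(X)$, Remark \ref{HalfInoue}(\ref{r=b2}) identifies $X$ as half-Inoue and makes $\mathcal{K}_X(C)$ $2$-torsion, so $\mathcal{K}_X(C)^{\otimes 2}\cong\mathcal{O}_X$ and Theorem \ref{piTh}(1)(a) pushes this down to $\omega_Y^{\otimes 2}\cong\mathcal{O}_Y$. In particular $h^1(\omega_Y^{\otimes 2})=h^2(\omega_Y^{\otimes 2})=0$ and $\chi(\omega_Y^{\otimes 2})=1$; semicontinuity preserves the vanishing of $h^1$ and $h^2$ on nearby smooth fibres, so the constancy of $\chi$ forces $P_2(Y')=1$ exactly. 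Lemma \ref{Lemmh0}(2) also bounds $P_n(Y')$ by $1$ for even $n$ and by $0$ for odd $n$, whence $\kappa(Y')=0$, and a smooth compact complex surface with $q=p_g=0$, $P_2=1$, $\kappa=0$ is Enriques by Enriques--Kodaira.

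The hardest point to justify rigorously will be the deformation-theoretic behaviour of the dualising sheaf and of Riemann--Roch across the flat family $\mathcal{Y}\to U$, whose central fibre is singular: one needs $\omega_{\mathcal{Y}/U}$ to be a line bundle restricting correctly to each fibre (the Gorenstein hypothesis on the cusp), Grauert-type upper semicontinuity of $h^i(\omega_{Y_t}^{\otimes n})$, and the Riemann--Roch identity $K^2=\chi(\omega^{\otimes 2})-\chi(\mathcal{O})$ on singular Gorenstein surfaces so that $K^2$ becomes a flat invariant. Granted these, the cohomological comparisons above reduce to Theorem \ref{piTh} and Lemma \ref{Lemmh0}.
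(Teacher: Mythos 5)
Your proof of part (1) and the overall architecture of parts (2) and (3) coincide with the paper's: reduce global smoothability to the cusp via Corollary \ref{LocGlobSmCo} and Theorem \ref{condsufflissage}, then control the invariants of nearby fibres through the relative dualising sheaf, semicontinuity, Theorem \ref{piTh} and Lemma \ref{Lemmh0}. Your transport of $K^2$ via the flat invariance of $\chi(\omega^{\otimes 2})-\chi(\mathcal{O})$ is a legitimate variant of the paper's route (which applies Noether's formula on $Y'$ and the invariance of $c_1(\omega_{Y_u})^2$); note however that on the singular central fibre the identity $\chi(\omega_Y^{\otimes 2})-\chi(\mathcal{O}_Y)=(K_X+C)^2$ is not literally ``Riemann--Roch on Gorenstein surfaces'' (the surface $Y$ need not be projective) and must be checked by a Leray computation comparing $\chi(\omega_Y^{\otimes n})$ with $\chi(\mathcal{K}_X(C)^{\otimes n})$, using that $\mathcal{K}_X(C)^{\otimes n}$ is trivial near $C$ so that the $R^1\pi_*$ corrections for $\omega_Y^{\otimes 2}$ and for $\mathcal{O}_Y$ cancel. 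You flag this but do not carry it out; it does work.

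The one step that fails as written is the conclusion of part (3). A smooth compact surface with $q=p_g=0$, $P_2=1$ and $\kappa=0$ need not be an Enriques surface: a blow-up of an Enriques surface has exactly the same $q$, $p_g$ and plurigenera, since these are birational invariants, and the Enriques--Kodaira classification only identifies the \emph{minimal model}. Your argument controls only cohomological data of $\omega_{Y'}^{\otimes n}$ and therefore cannot distinguish $Y'$ from a blown-up Enriques surface. The paper avoids this by working with the sheaf itself: using Grauert base change it trivialises $\omega^{\otimes 2}$ on the total space near $Y$, so that $\mathcal{K}_{Y'}^{\otimes 2}\simeq\mathcal{O}_{Y'}$ as a line bundle, which makes $K_{Y'}$ numerically trivial and hence $Y'$ minimal. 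Alternatively you can close the gap with the machinery you already set up in part (2): when $r=b_2(X)$ one has $c_1(\mathcal{K}_X(C))$ torsion (Remark \ref{HalfInoue}), so $(K_X+C)^2=0$, hence $K_{Y'}^2=0$, which rules out $(-1)$-curves on a surface with $\kappa(Y')=0$. Either supplement completes your argument.
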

\begin{proof} Since  $X$ admits a cycle $C$ of rational curves with $C^2<0$, it cannot be an Enoki surface, so the classes of the irreducible curves are linearly independent in $H_2(X,\Q)$. This implies $r\leq b_2(X)$.  \\
(1) This follows directly from Theorem \ref{condsufflissage} and Corollary \ref{LocGlobSmCo}.\\
\\
(2), (3) Let $\pi:X\to Y$ be the contraction map. Use  the exact sequence
$$0\to H^1(Y,{\cal O}_Y)\to  H^1(X,{\cal O}_X)\to H^0(Y,R^1\pi_*({\cal O}_X))\to H^2(Y,{\cal O}_Y)\to 0
$$
given by the Leray spectral sequence and the vanishing of $H^2(X, {\cal O}_X)$ \cite[Theorem 1.2, p. 478 ]{NaToh}. By Corollary \ref{h2Y} we have  $H^2(Y,{\cal O}_Y)=0$, so  the morphism $H^1(X,{\cal O}_X)\to H^0(Y,R^1\pi_*({\cal O}_X))$ is surjective. On the other hand, since $(Y,c)$ is an elliptic singularity, we have  $h^0(Y,R^1\pi_*({\cal O}_X))=1$, so the morphism 
$$H^1(X,{\cal O}_X)\to H^0(Y, R^1\pi_*({\cal O}_X))$$
is an isomorphism. Therefore 	$h^1(Y,{\cal O}_Y)=0$. Using  Theorem \ref{piTh} and Lemma \ref{Lemmh0} (or Serre duality) we obtain $h^0(\omega_Y)=0$. Let  now
$$Y\stackrel{i_0}{\longhookrightarrow} {\cal Y}\textmap{p} U	$$
 be a deformation of $Y$ with base $(U,u_0)$. Since $p$ is   flat, one can define an invertible sheaf $\omega$ on the total space ${\cal Y}$ whose restriction to any fibre $Y_u$ is the dualising sheaf $\omega_{Y_u}$ (see \cite[section 2.1.12, p. 51]{HLO}).  Applying the semicontinuity theorem to the sheaves ${\cal O}_{\cal Y}$, $\omega$ and assuming $U$ sufficiently small we get: 
 \begin{equation}\label{h1h0}
 \forall u\in U,\ h^1({\cal O}_{Y_u})=h^0(\omega_{Y_u})=0\,.
 \end{equation}
 \vspace{1mm}

Suppose first that $r<b_2(X)$. In this case a new application of Theorem \ref{piTh}, Lemma \ref{Lemmh0} and semicontinuity theorem gives
\begin{equation}
\forall u\in U,\ h^0(\omega_{Y_u}^{\otimes 2})=0  \,.	
\end{equation}
so any smooth fibre $Y'=Y_u$ has   $q(Y')=P_2(Y')=0$, so $Y'$ is a rational surface by Castelnuovo rationality criterion. Using the Noether formula, Theorem \ref{piTh} (\ref{omegaY})(b) and (\ref{CycleId}) we obtain 
\begin{equation}\label{comp}
\begin{split}
 2+b_2(Y')&=\chi(Y')=c_2(Y')=12\chi({\cal O}_{Y'})-c_1(Y')^2=12-c_1(\omega_{Y'})^2\\
&=12-c_1(\omega_{Y})^2=12-c_1({\cal K}_X(C))^2=12+b_2(X)+C^2\,.
\end{split}
\end{equation} 
The equality $b_2(X)+C^2=\#C$ follows from \cite[Corollary 2.36 p. 664]{DlJAMS}.
\\

Suppose now that $r=b_2(X)$. In this case  Theorem \ref{piTh} and Remark \ref{HalfInoue} (\ref{order2}) give
$$\omega_Y^{\otimes 2}=\pi_*({\cal K}_X(C)^{\otimes 2})=\pi_*({\cal O}_X)={\cal O}_Y\,.
$$
so    
\begin{equation}\label{h0=1h1=0}
h^0(\omega_Y^{\otimes 2})=1,\ 	h^1(\omega_Y^{\otimes 2})=0	\,.
\end{equation}
Using  \cite[Corollary 3.9, p. 122]{BS} it follows that,  choosing $U$ sufficiently small,   the sheaf $p_*(\omega^{\otimes 2})$ is free on $U$, and   the canonical map  
$$p_*(\omega^{\otimes 2})(u_0)\to H^0(\omega_{Y}^{\otimes 2})$$
is an isomorphism. Since $\dim(H^0(\omega_{Y}^{\otimes 2}))=1$ we have $p_*(\omega^{\otimes 2})\simeq {\cal O}_U$. Choose a nowhere vanishing section $\chi\in H^0(U,p_*(\omega^{\otimes 2}))$, and let $\tilde \chi$ be the corresponding section  of $\omega^{\otimes 2}$. The restriction of $\tilde \chi$ to  the fibre $Y$ is nowhere vanishing (because the restriction morphism $p_*(\omega^{\otimes 2})(u_0)\to H^0(\omega_{Y}^{\otimes 2})$ is an isomorphism, and $\omega_{Y}^{\otimes 2}$ is trivial), so the vanishing locus of $\Delta$ of $\tilde\chi$ does not intersect $Y$. Replacing $U$ by $U\setminus p(\Delta)$ and ${\cal Y}$   by $p^{-1}(U\setminus p(\Delta))$ if necessary, we may suppose that  $\omega^2$ is trivial on the total space ${\cal Y}$.

For a smooth fibre $Y'=Y_u$ we obtain ${\cal K}_{Y'}=\omega_{Y'}\simeq {\cal O}_{Y'}$, and by (\ref{h1h0}) we have $q(Y')=p_g(Y')=0$.  This shows that $Y'$ is an Enriques surface.

\end{proof}

The case when   $C$ is one of the two cycles of  an Inoue-Hirzebruch surface has been studied in \cite[section III.2]{Lo81}. Looijenga's results show that:

\begin{thry}\label{LoojThIH} Let $X$ be an Inoue-Hirzebruch surface with two cycles $C$, $D$ of rational curves,  let $\pi:(X,C)\to (Y,c)$ be the contraction on the singular surface obtained by contracting $C$, and $\pg:(X,C,D)\to (Z,c,d)$ be the contraction on the singular surface obtained by contracting $C$ and $D$.  Let
$$Z\stackrel{j_0}{\longhookrightarrow} {\cal Z}\textmap{p} V$$
be a universal deformation of $Z$ with base $(V,v_0)$,  let 
$$p_0:{\cal Z}_0\to V_0
$$
be the subfamily preserving the singularity $(Z,d)$,  and $\delta\subset {\cal Z}_0$ be the section defined by the cusps obtained as deformations of $d$.  Let 
$$Y\stackrel{\hat{j}_0}{\longhookrightarrow} \hat {\cal Z}_0\textmap{\hat p_0} V_0$$
 be the family obtained by minimal resolution along $\delta$.    Then
\begin{enumerate}
\item The germ $(\hat {\cal Z}_0,\hat{j}_0(c))$ is a semi-universal deformation of the singularity $(Y,c)$, in particular $\hat p_0$ has smooth fibres near $Y$ if and only if  this singularity is smoothable. 
\item The exceptional divisor of $ \hat {\cal Z}_0$ is a relative anti-canonical cycle.
\item Any smooth fibre $Y_v$ of $\hat p_0$ is a rational surface endowed with an anti-canonical cycle $D_v$ with the same type as $D$.	
\end{enumerate}
\end{thry}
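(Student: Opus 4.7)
\medskip

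\noindent\emph{Proof proposal.}
The strategy exploits the Hirzebruch--Zagier duality between the two cycles $C$ and $D$ of the Inoue--Hirzebruch surface $X$, together with the identity ${\cal K}_X \simeq {\cal O}_X(-C-D)$ already used in the proof of Lemma \ref{FirstIsos}. From ${\cal K}_X(C)\simeq{\cal O}_X(-D)$ and Theorem \ref{piTh}(\ref{omegaY})(a) one obtains
$$
\omega_Y \;\simeq\; \pi_*({\cal K}_X(C)) \;\simeq\; \pi_*({\cal O}_X(-D)) \;\simeq\; {\cal O}_Y(-\pi(D)),
$$
since $\pi$ restricts to an isomorphism near $D$. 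Thus $\pi(D)$ is an anti-canonical cycle in $Y$ of the same type as $D$; this is the fibrewise version of (2), and it already settles the $v_0$-instance of (3).

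The proof of (1) rests on a natural equivalence between $\Def(Y,c)$ and the subfunctor of $\Def(Z)$ consisting of deformations that preserve the cusp at $d$. The minimal resolution of a cusp is intrinsic and varies analytically in families of cusps of fixed topological type: minimal resolution along the section $\delta\subset{\cal Z}_0$ of $d$-type cusps produces the family $\hat p_0:\hat{\cal Z}_0\to V_0$, while fibrewise contraction of the exceptional cycle over $\delta$ is its inverse. Semi-universality of $p:{\cal Z}\to V$ for $Z$ then transfers to semi-universality of the germ $(\hat{\cal Z}_0,\hat{j}_0(c))$ over $(V_0,v_0)$ for the singularity $(Y,c)$, provided obstructions vanish. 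The vanishing $H^2(Y,\Theta_Y)=0$ from Theorem \ref{H2=0}, together with the Gorenstein property of $Z$ near $d$, ensures both the bijectivity of the Kodaira--Spencer map and the matching of obstructions.

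Statement (2) is then the relative upgrade of the first paragraph. The relative dualising sheaf $\omega_{{\cal Z}_0/V_0}$ is invertible (cusps are Gorenstein), and the fibrewise trivialisation of $\omega_Z$ near $d$ extends to a trivialisation of $\omega_{{\cal Z}_0/V_0}$ near $\delta$ by applying the base-change and semicontinuity theorems to $(p_0)_*\omega_{{\cal Z}_0/V_0}$. Pulling back to $\hat{\cal Z}_0$ and using the adjunction-type identity relating $\omega$ and the exceptional cycle for the minimal resolution of a cusp then yields a trivialisation of $\omega_{\hat{\cal Z}_0/V_0}(D_{\mathrm{rel}})$ in a neighbourhood of the relative exceptional divisor $D_{\mathrm{rel}}$. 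For (3), any smooth fibre $Y_v$ of $\hat p_0$ is a smooth deformation of $Y$; since $b_2(X)=|C|+|D|>|C|=r$, Theorem \ref{lissageglobal}(2) shows that $Y_v$ is a rational surface. The cycle $D_v:=D_{\mathrm{rel}}\cap Y_v$ is anti-canonical by (2), and its combinatorial type equals that of $D$ because self-intersections and incidence relations of the components of the minimal resolution of a cusp are topological invariants constant along $V_0$.

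The chief obstacle is (1), i.e.\ showing that the classifying map from $(V_0,v_0)$ to the base of the semi-universal deformation of the germ $(Y,c)$ is an isomorphism of germs. The delicate points are (i) the identification of tangent spaces, which needs a comparison of $T^1(Z)$ with $T^1(Z,c)\oplus T^1(Z,d)$ via a global $H^1$ vanishing on $Z$, and (ii) the matching of obstruction spaces, which uses $H^2(Y,\Theta_Y)=0$ together with an analogous local vanishing for the cusp $(Z,d)$. Once this functorial comparison is pinned down, (2) and (3) follow from the first and third paragraphs respectively.
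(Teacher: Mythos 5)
First, a point of comparison: the paper does not prove Theorem \ref{LoojThIH} at all. It is stated as a summary of Looijenga's work and attributed to \cite[section III.2]{Lo81}, so there is no internal proof to measure yours against. That said, your skeleton does match what Looijenga actually does: the identity ${\cal K}_X\simeq{\cal O}_X(-C-D)$ (already used in the proof of Lemma \ref{FirstIsos}) gives ${\cal K}_X(C)\simeq{\cal O}_X(-D)$ and hence $\omega_Y\simeq{\cal O}_Y(-\pi(D))$, which is the correct route to (2) and to the anti-canonical statement in (3).

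The genuine gap is in (1), which is the substantive claim. Semi-universality of $(\hat{\cal Z}_0,\hat j_0(c))\to (V_0,v_0)$ for the cusp $(Y,c)$ requires the Kodaira--Spencer map $T_{v_0}V_0\to T^1(Y,c)$ to be \emph{bijective}, not merely surjective. Surjectivity (completeness) would follow from regularity of the global-to-local morphism $\Def(Z)\to\Def(Z,c)\times\Def(Z,d)$, and the relevant vanishing for that is $H^2(Z,\Theta_Z)=0$; since $Z$ is the contraction of \emph{both} cycles, Theorem \ref{H2=0} does not literally apply and its proof has to be re-run with $C+D$ in place of $C$ (using $H^2(X,\Theta_X(-\log(C+D)))=0$). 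Injectivity is where the hypothesis that $X$ is an Inoue--Hirzebruch surface is genuinely used: one needs $H^1(Z,\Theta_Z)=0$, equivalently the rigidity $H^1(X,\Theta_X(-\log(C+D)))=0$ (these surfaces are isolated points in the moduli space of Kato surfaces, as the introduction recalls). Without this, the family over $V_0$ is only versal and (1) fails as stated; your proposal never addresses injectivity. You also assert, rather than justify, the existence of a simultaneous minimal resolution along $\delta$ --- this is legitimate here because the subfamily preserving $(Z,d)$ is locally trivial along $\delta$, but that is exactly the point that needs saying. The appeal to Theorem \ref{lissageglobal}(2) for rationality of the smooth fibres in (3) is fine, since $\#\mathrm{Irr}(C)<b_2(X)$ when $X$ carries two cycles.
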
 
This result shows that if  $(Y,c)$ is smoothable, then the type  $[d_0,\dots,d_{s-1}]$ of $D$  is the type of an anti-canonical cycle in a rational surface; this was the motivation behind Looijenga's conjecture. Since now this conjecture is a theorem,  we obtain
\begin{co}
In the conditions and with the notations of Theorem 	\ref{LoojThIH}, the following conditions are equivalent:
\begin{enumerate}
	\item The singularity  $(Y,c)$ is smoothable.
	\item The type $[d_0,\dots,d_{s-1}]$ of $D$ is the type of an anti-canonical cycle in  a smooth rational surface which admits $\P^2$ as minimal model.
	\item $Y$ admits smooth deformations which are  smooth rational surfaces  admitting $\P^2$ as minimal model, and endowed with  anti-canonical cycles.
\end{enumerate}
\end{co}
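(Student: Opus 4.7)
The plan is to derive the three-way equivalence by assembling Theorem~\ref{LoojThIH}, Theorem~\ref{MinMod}, Corollary~\ref{LocGlobSmCo}, and the Gross--Hacking--Keel--Engel Theorem~\ref{ThLooijConj}. No substantial new computation is required; the proof consists of citing these results in the right order and checking that the compatibility conditions between them are fulfilled.

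For $(1)\Leftrightarrow(2)$, the forward direction is a direct application of Theorem~\ref{ThLooijConj}: if $(Y,c)$ is smoothable, the Looijenga criterion produces a smooth rational surface whose anti-canonical cycle has type $[d_0,\dots,d_{s-1}]$. Since $[c_0,\dots,c_{r-1}]\in{\cal T}$, its Hirzebruch--Zagier dual also belongs to ${\cal T}$, and Theorem~\ref{MinMod} upgrades the resulting rational surface to one admitting $\P^2$ as minimal model. The reverse direction $(2)\Rightarrow(1)$ is the other half of Theorem~\ref{ThLooijConj}: an anti-canonical cycle of type $[d_0,\dots,d_{s-1}]$ in \emph{any} smooth rational surface already ensures smoothability of $(Y,c)$, so the extra ``$\P^2$-minimal-model'' hypothesis in (2) is certainly enough to conclude.

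For $(1)\Rightarrow(3)$ I invoke Theorem~\ref{LoojThIH}. Assuming $(Y,c)$ is smoothable, part~(1) of that theorem yields that the family $\hat p_0:\hat{\cal Z}_0\to V_0$ has smooth fibres in every neighbourhood of $v_0$. By part~(3), each such smooth fibre $Y_v$ is a rational surface carrying an anti-canonical cycle $D_v$ of the same type as $D$, namely $[d_0,\dots,d_{s-1}]\in{\cal T}$; a second application of Theorem~\ref{MinMod} then gives that $Y_v$ admits $\P^2$ as minimal model. The restriction of $\hat p_0$ to any sufficiently small arc through $v_0$ meeting the smooth locus therefore exhibits the required smooth deformations of $Y$. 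Finally, $(3)\Rightarrow(1)$ is immediate from Corollary~\ref{LocGlobSmCo}(1): if $Y$ admits a smooth small deformation, then $Y$ is smoothable, which by that corollary is equivalent to the isolated singularity $(Y,c)$ being smoothable.

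The only subtlety lies in $(1)\Rightarrow(3)$: one must use the specific family $\hat p_0$ produced by Theorem~\ref{LoojThIH} rather than an abstract semi-universal deformation of $Y$, because only this construction guarantees a relative anti-canonical cycle along the family, and hence an anti-canonical cycle of the prescribed type on each smooth fibre. Once this is observed, the three named theorems plug together without further work.
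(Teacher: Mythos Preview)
Your argument is correct and follows the same route as the paper: the equivalence is obtained by combining Theorem~\ref{LoojThIH} (which gives $(1)\Rightarrow(3)$ and hence the forward direction of $(1)\Rightarrow(2)$), Theorem~\ref{ThLooijConj} (which closes $(2)\Rightarrow(1)$), and Theorem~\ref{MinMod} (which supplies the ``$\P^2$ as minimal model'' clause). The paper compresses all this into a single sentence, but your more explicit chain of implications is exactly the intended proof; the only minor remark is that for $(3)\Rightarrow(1)$ you do not really need Corollary~\ref{LocGlobSmCo}, since smoothability of $Y$ trivially implies smoothability of the germ $(Y,c)$ by restriction.
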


\end{document}